\documentclass[11pt]{amsart}
\title{Special divisors on marked chains of cycles}
\author[N. Pflueger]{Nathan Pflueger}\address{Department of Mathematics, Brown University, Box
1917, Providence, RI 02912}\email{pflueger@math.brown.edu}
\date{\today}

\usepackage{url}
\usepackage{color}
\usepackage{xifthen}
\usepackage{tikz}
\usetikzlibrary{decorations.pathreplacing,snakes}
\usetikzlibrary{patterns}
\usepackage{mathabx}
\usepackage{youngtab}
\usepackage{amssymb}
\usepackage{array}

\usepackage{setspace}

\theoremstyle{plain}
\newtheorem{thm}{Theorem}[section]
\newtheorem{lemma}[thm]{Lemma}
\newtheorem{prop}[thm]{Proposition}

\newtheorem{cor}[thm]{Corollary}

\theoremstyle{definition}
\newtheorem{eg}[thm]{Example}

\newtheorem{defn}[thm]{Definition}
\newtheorem{conv}[thm]{Convention}
\newtheorem{notn}[thm]{Notation}
\newtheorem{sit}[thm]{Situation}
\theoremstyle{remark}

\newtheorem{rem}[thm]{Remark}
\newtheorem{qu}[thm]{Question}

\newcommand{\ZZ}{\textbf{Z}}

\newcommand{\TT}{\textbf{T}}
\newcommand{\RR}{\textbf{R}}

\newcommand{\iou}[1][]{
    \ifthenelse{\equal{#1}{}}{{\color{blue}\{IOU\}}}
    {{\color{blue}\{IOU: #1\}}}
}
\newcommand{\la}[1]{\langle #1 \rangle}
\newcommand{\wl}{W^\lambda}
\newcommand{\wlgw}{W^\lambda(\Gamma,w_g)}
\newcommand{\um}{\underline{m}}

\DeclareMathOperator{\Pic}{Pic}
\DeclareMathOperator{\Div}{Div}
\DeclareMathOperator{\disp}{disp}

\DeclareMathOperator{\Trop}{Trop}
\DeclareMathOperator{\hook}{hook}

\begin{document}
\maketitle

\begin{abstract}
We completely describe all Brill-Noether loci on metric graphs consisting of a chain of $g$ cycles with arbitrary edge lengths, generalizing work of Cools, Draisma, Payne, and Robeva. The structure of these loci is determined by displacement tableaux on rectangular partitions, which we define. More generally, we fix a marked point on the rightmost cycle, and completely analyze the loci of divisor classes with specified ramification at the marked point, classifying them using displacement tableaux. Our results give a tropical proof of the generalized Brill-Noether theorem for general marked curves, and serve as a foundation for the analysis of general algebraic curves of fixed gonality.
\end{abstract}

\section{Introduction}

The tropical proof of the Brill-Noether theorem \cite{cdpr} gave a parameterization of the sets $W^r_d(\Gamma)$ of special divisors on a metric graph $\Gamma$ composed of a chain of cycles with generic edge lengths. The purpose of this paper to to generalize this analysis, in two ways. We use these generalizations to give a tropical proof of the generalized Brill-Noether theorem (for curves with a marked point), and also as a foundation for our paper \cite{pfl2}, which gives applications to the geometry of general curves of fixed gonality.

The first generalization is that we consider \textit{arbitrary} chains of cycles, with any edge lengths. We show that the behavior of the loci of special divisor classes on $\Gamma$ depends only on a sequence of numbers $\um = (m_2,m_3,\cdots,m_g)$, easily computed from the edge lengths, called the \emph{torsion profile} of $\Gamma$ (Definition \ref{def:tp}). The torsion profile determines whether the chain $\Gamma$ is Brill-Noether general in the sense of \cite{cdpr}. The metric graph $\Gamma$ is called \emph{Brill-Noether general} if $\dim W^r_d(\Gamma) = g-(r+1)(g-d+r)$ whenever this number is nonnegative, and $W^r_d(\Gamma)$ is empty otherwise. The original genericity condition of \cite{cdpr} is equivalent to saying that $m_i = 0$ or $m_i > 2g-2$.

\begin{thm} \label{thm_genericity1}
A chain of cycles $\Gamma$ is Brill-Noether general if and only if for each $i \in \{2,3,\cdots,g-1\}$, either $m_i = 0$ or $$m_i > \min( i, g+1-i).$$
\end{thm}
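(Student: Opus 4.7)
The plan is to deduce the theorem from the classification of $W^r_d(\Gamma)$ in terms of $\um$-displacement tableaux, which is the central content of the paper. Under that classification, for each pair $(r,d)$ the locus $W^r_d(\Gamma)$ decomposes into pieces indexed by $\um$-displacement tableaux on the rectangular partition $\lambda = (g-d+r)^{r+1}$ with entries in $\{1,\ldots,g\}$, and each piece contributes a component whose dimension strictly exceeds the Brill-Noether number $g-(r+1)(g-d+r)$ precisely when the tableau is non-standard, i.e.\ when its validity relies on a congruence of the form $j\equiv i \pmod{m_i}$ forced by two boxes sharing a row or column. Thus $\Gamma$ is Brill-Noether general if and only if for every $(r,d)$ every $\um$-displacement tableau on $(g-d+r)^{r+1}$ is in fact a standard tableau.

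For the reverse implication, I would assume that $m_i = 0$ or $m_i > \min(i,g+1-i)$ for all $i \in \{2,\ldots,g-1\}$, and show that no rectangle $\lambda = b^a$ admits a non-standard $\um$-displacement tableau $t$. Suppose $t$ uses a congruence with $j = i + m_i$, where $i$ occupies a box $(p,q)$ and $j$ occupies a box $(p',q')$ in the same row or column. The hook inequalities for a rectangular tableau with entries in $\{1,\ldots,g\}$ give $i \geq p+q-1$ and $i+m_i \leq g - (a+b-p'-q')$. Combining these with $p' \geq p$, $q' \geq q$, and that one of $p'-p$ or $q'-q$ is at least $m_i$ (since entries strictly increase), I obtain both $m_i \leq i$ and $m_i \leq g+1-i$, contradicting the hypothesis. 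Hence every tableau is standard, and the CDPR analysis yields Brill-Noether generality.

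For the forward implication, suppose $0 < m_i \leq \min(i, g+1-i)$ for some $i\in\{2,\ldots,g-1\}$. I would construct an explicit non-standard $\um$-displacement tableau by choosing the rectangle so that $r+1 \geq m_i+1$ and $g-d+r$ is compatible, placing $i$ and $i+m_i$ in boxes sharing a column (or row), and filling the remaining boxes with a standard Young tableau. The two inequalities $m_i \leq i$ and $m_i \leq g+1-i$ are precisely what is needed to guarantee enough room below-left of the box containing $i$ and above-right of the box containing $i+m_i$. The resulting tableau contributes a component of $W^r_d(\Gamma)$ of dimension one greater than expected, so $\Gamma$ fails to be Brill-Noether general.

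The main obstacle is the careful bookkeeping in the reverse direction: verifying that the hook inequalities in a rectangle conspire to give exactly the bound $m_i \leq \min(i, g+1-i)$, not merely $m_i \leq i$ or $m_i \leq g+1-i$ separately. This requires treating the cases ``same row'' and ``same column'' symmetrically and handling edge cases where $(p,q)$ or $(p',q')$ lies on the boundary of the rectangle. The forward direction, by contrast, reduces to displaying one explicit tableau once the correct $(r,d)$ is chosen, and the link to $W^r_d(\Gamma)$ comes from the classification theorem that will be proved later.
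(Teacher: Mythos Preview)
Your proposal rests on a misreading of the definition of an $\um$-displacement tableau. A non-standard $\um$-displacement tableau is one in which the \emph{same} label $i$ appears in two distinct boxes $(x,y)$ and $(x',y')$; since $t$ is strictly increasing along rows and columns, these boxes are necessarily in \emph{different} rows and \emph{different} columns, and the congruence required is $x-y \equiv x'-y' \pmod{m_i}$ on the \emph{coordinates}, not on the labels. There is no mechanism involving two distinct labels $i$ and $j=i+m_i$ sharing a row or column, so the hook inequalities you set up (comparing the label at $(p,q)$ with the label at $(p',q')$ in the same row or column) are not the relevant constraints, and neither direction of your argument goes through as written.

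The paper's argument instead works as follows. For necessity, given $0<m_i\le\min(i,g+1-i)$ one builds an explicit tableau on the $2\times m_i$ rectangle in which the label $i$ is placed at both $(m_i,1)$ and $(1,2)$; the two displacement values differ by $m_i$, and the bounds on $m_i$ ensure all labels lie in $\{1,\ldots,g\}$. For sufficiency, one assumes a rectangular $\lambda$ admits a tableau with a repeated label $i$ at $(x,y)$ and $(x',y')$ (with $x<x'$, $y>y'$), and chooses such a $\lambda$ of minimal size; minimality forces the two boxes to be the upper-left and lower-right corners of the rectangle. Counting the distinct labels along the hook from $(1,y)$ down and right to $(x',1)$ gives $m_i\le (x'-1)+(y-1)\le i$, and the symmetric hook from $(1,y)$ right and down to $(x',1)$ through the opposite corner gives $m_i\le g+1-i$. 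Your outline can be repaired along these lines once the correct notion of ``repeated symbol'' is used.
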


The second direction in which we generalize \cite{cdpr} is by considering special divisors with prescribed ramification at a marked point $w \in \Gamma$, which we always take to be on the rightmost cycle of the chain. In algebraic geometry, imposing ramification conditions amounts to considering only those linear series that correspond to maps to projective space in which the marked point is inflected in a particular way (see Definition \ref{def:wrad}). Although this generalization is interesting in its own right, it is in fact a necessary ingredient in our arguments even in the ordinary case. We will define, for any metric graph $\Gamma$ with a marked point $w$, generalized Brill-Noether loci $W^\lambda(\Gamma,w) \subseteq \Pic^0(\Gamma)$, where $\lambda$ is a partition. 

Here by a \emph{partition} we mean a finite, non-increasing sequence of nonnegative integers, where two such sequences are considered the same if one is obtained by adding a sequence of $0$s to the end of the other. We will also identify partitions with their Young diagrams (in French notation), which we in turn regard as subsets of $\ZZ^2_{>0}$, according to the following convention.

\begin{conv} \label{conv_youngdiagram}
Any partition $\lambda = (\lambda_0,\lambda_1,\cdots,\lambda_r)$ (where $\lambda_0 \geq \lambda_1 \geq \cdots \geq \lambda_r$) will be identified with the subset $$\{(x,y) \in \ZZ^2_{>0}:\ 1 \leq x \leq \lambda_{y-1},\ 1 \leq y \leq r+1 \},$$ which we will refer to as the Young diagram of $\lambda$. We denote the number of elements in this set (i.e. the sum of the elements $\lambda_i$) by $|\lambda|$.
\end{conv}

The link between the generalized Brill-Noether loci $W^\lambda(\Gamma,w)$ and the ordinary Brill-Noether loci is that $W^r_d(\Gamma)$ is isomorphic to $W^\lambda(\Gamma,w)$, where $\lambda$ is a \textit{rectangular} partition of height $r+1$ and width $g-d+r$, and $w$ is any marked point. The isomorphism $W^\lambda(\Gamma,w) \rightarrow W^r_d(\Gamma)$ is given by $[D] \mapsto [D + d \cdot w]$.

\begin{center}$\begin{array}{m{7cm} m{5cm}}
\begin{tikzpicture}[scale=0.5]
\draw (-0.5,2) node[left] {$\lambda = $};
\draw (0,0) rectangle (6,4);
\foreach \i in {1,2,...,5} \draw (\i,0) -- (\i,4);
\foreach \j in {1,2,3} \draw (0,\j) -- (6,\j);
\draw[snake=brace,thick] (6,-0.25) -- (0,-0.25) node[midway,below] {$g-d+r$};
\draw[snake=brace,thick] (6.25,4) -- (6.25,0) node[midway,right] {$r+1$};
\end{tikzpicture}
&
$W^r_d(\Gamma) \cong W^\lambda(\Gamma,w)$
\end{array}$\end{center}

The analog of the Brill-Noether number is $g - |\lambda|$. Note in particular that when $\lambda$ is the rectangle above, this is the usual Brill-Noether number $g - (r+1)(g-d+r)$. We call a \emph{marked} metric graph $(\Gamma,w)$ \emph{Brill-Noether general} if for all partitions $\lambda$,  $\dim W^\lambda(\Gamma,w) = g - |\lambda|$ if this number is nonnegative, and $W^\lambda(\Gamma,w)$ is empty otherwise. We obtain the following marked-point version of Theorem \ref{thm_genericity1}.

\begin{thm} \label{thm_genericity2}
Let $(\Gamma,w)$ be a marked chain of cycles with $w$ on the rightmost cycle and torsion profile $\um = (m_2,\cdots,m_g)$. Then $(\Gamma,w)$ is Brill-Noether general if and only if for each $i \in \{2,3,\cdots,g\}$, either $m_i = 0$ or $m_i > i$.
\end{thm}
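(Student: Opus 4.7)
The plan is to reduce Theorem \ref{thm_genericity2} to a combinatorial statement about displacement tableaux on the Young diagram of $\lambda$. I expect the body of the paper to establish a structural theorem describing $\wl(\Gamma,w)$ as a union of strata indexed by admissible displacement tableaux of shape $\lambda$ with entries in $\{1,\ldots,g\}$, where admissibility is dictated by the torsion profile $\um$ and each stratum has dimension computed combinatorially from the tableau. Given this reduction, Brill-Noether generality becomes the twin statements that (a) every admissible displacement tableau of shape $\lambda$ gives a stratum of dimension at most $g-|\lambda|$, and (b) this bound is attained when $g-|\lambda|\geq 0$.

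For the forward direction (``only if''), I would proceed by contrapositive. Suppose $i\in\{2,\ldots,g\}$ satisfies $0<m_i\leq i$. The role of $m_i$ in the admissibility rules is to permit a repetition of the label $i$ between two cells whose relative position matches $m_i$, and the inequality $m_i\leq i$ precisely makes such a repetition realizable inside a small Young diagram of area on the order of $i$. I would exhibit a concrete partition $\lambda$ (for instance, a hook or rectangle of area comparable to $i$) together with an admissible displacement tableau using only $|\lambda|-1$ distinct labels; this produces a stratum of dimension strictly greater than $g-|\lambda|$, or else a nonempty locus in a range where Brill-Noether generality forbids it.

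For the converse (``if''), I would assume the torsion hypothesis on every $m_i$ and bound the dimension of every stratum uniformly. The key observation is that when $m_i>i$, any two cells of $\lambda$ labelled $i$ would have to be separated, in the precise sense coming from the admissibility rule, by more than $i$ units along a row or column; but the strict monotonicity along rows and columns built into a displacement tableau together with the bound $i$ on the label forbid such a separation on the relevant portion of $\lambda$. Hence collisions are either impossible or costly enough in dimension to preserve the bound $\dim \wl(\Gamma,w)\leq g-|\lambda|$. The existence of a stratum attaining the bound should follow from a straightforward greedy construction, assigning the smallest admissible label to each cell in reading order.

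The main obstacle lies in the converse: accounting for simultaneous collisions across many indices and showing the cumulative dimension penalty is at least $|\lambda|$. I expect to handle this by induction on $|\lambda|$, peeling off the outermost rim of the Young diagram, applying the hypothesis to a single label at each step, and invoking Convention \ref{conv_youngdiagram} to keep the geometry of collisions tractable. A secondary subtlety worth highlighting is the appearance of $i=g$ in Theorem \ref{thm_genericity2} but not in Theorem \ref{thm_genericity1}: the marked point on the rightmost cycle makes $m_g$ an effective constraint for partitions that ``reach'' the last cycle, whereas in the unmarked setting the final torsion has no bearing on the Brill-Noether loci.
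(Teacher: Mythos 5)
Your reduction and overall route coincide with the paper's: Theorem \ref{t_tori}, through Corollaries \ref{c_dp} and \ref{c_gencrit}, converts Brill--Noether generality of $(\Gamma,w_g)$ into the purely combinatorial assertion that no $\um$-displacement tableau on any partition repeats a symbol, and your ``only if'' direction is exactly the paper's: when $0<m_i\le i$ one takes the hook $\lambda=(m_i,1)$ with the filling $t(x,1)=x-m_i+i$, $t(1,2)=i$, which is an $\um$-displacement tableau repeating the symbol $i$ (your ``hook of area comparable to $i$,'' made explicit). Your closing remark about why $i=g$ matters here but not in Theorem \ref{thm_genericity1} is also on target: non-rectangular shapes are now allowed.

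The gap is in your converse. Your key observation --- that strict row/column monotonicity plus the bound $i$ on the label forbids two cells labelled $i$ from being separated, in the sense of the congruence rule, by more than $i$ units (i.e.\ content difference $(x'-y')-(x-y)>i$ for $x<x'$, $y>y'$) --- is false as stated. Monotonicity only bounds $\max(x'-x,\,y-y')$ by $i-1$, not the sum. Concretely, on the hook $(3,1,1)$ the filling $1,2,3$ along the bottom row and $1,2,3$ up the first column is strictly increasing in rows and columns and repeats $3$ at $(3,1)$ and $(1,3)$, at content difference $4$; it is a legitimate $\um$-displacement tableau with $m_3=4>3$ (taking $m_2=2$). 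So a repeat of $i$ at separation exceeding $i$ is not directly impossible under your hypotheses; it merely forces a repeat of a \emph{smaller} label at strictly smaller separation (here the label $2$), and exploiting that descent is the actual content of the converse. The paper's argument does this by counting the $(x'-x)+(y-y')-1$ boxes on the hook-shaped path from $(x,y)$ down to $(x,y')$ and across to $(x',y')$: all their labels are $<i$, and --- provided one chooses a repeated pair of minimal content difference, so that any coincidence among these labels would contradict minimality --- they are pairwise distinct, giving $i\ge(x'-x)+(y-y')\ge m_i>0$ and hence $m_i\le i$, the desired contradiction. Note also that ``collisions costly enough in dimension'' cannot rescue the bound: any admissible repeated symbol already yields a torus of dimension strictly greater than $g-|\lambda|$ (or a nonempty $W^\lambda(\Gamma,w_g)$ with $|\lambda|>g$), so collisions must be excluded outright. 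Your proposed induction on $|\lambda|$ by peeling rims does not obviously produce this descent; the induction should run on the separation of the offending pair (equivalently, pick a minimal one), as above.
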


Our main result is an explicit parameterization of the loci $W^\lambda(\Gamma,w)$, when $\Gamma$ is any chain of cycles of genus $g$ (with arbitrary edge lengths), $w$ is a marked point on the rightmost cycle, and $\lambda$ is any partition. This parameterization is based on combinatorial objects called \emph{$\um$-displacement tableaux} (Definition \ref{def:dt}), which are Young tableaux on $\lambda$ with certain constraints depending on the torsion profile $\um$. We write $t \vdash_{\um} \lambda$ to indicate that $t$ is an $\um$-displacement tableau on the partition $\lambda$. Each such tableau $t$ defines a locus $\TT(t) \subseteq \Pic^0(\Gamma)$, homeomorphic to a torus of dimension equal to $g$ minus the number of symbols appearing in $t$ (Definition \ref{def:Tt}). In particular, $\dim \TT(t) \geq g - |\lambda|$, with equality if and only if $\lambda$ has no repeated symbols.

\begin{thm} \label{t_tori}
For any chain $\Gamma$ of torsion profile $\um$ and any partition $\lambda$, $$W^\lambda(\Gamma,w) = \bigcup_{t \vdash_{\um} \lambda} \TT(t).$$
\end{thm}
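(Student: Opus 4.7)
The plan is to establish the two inclusions separately. The underlying technical framework will be to represent divisor classes by their $v$-reduced (break divisor) representatives on each cycle, and to translate the ramification condition defining $W^\lambda(\Gamma,w)$ into concrete vanishing conditions on effective representatives of $D + n \cdot w$ for appropriate integers $n$. The induction will run cycle by cycle from right to left, following the overall architecture of the proof in \cite{cdpr}, but generalized to arbitrary partitions $\lambda$ and arbitrary torsion profiles $\um$.

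For the easy inclusion $\bigcup_{t \vdash_{\um} \lambda} \TT(t) \subseteq W^\lambda(\Gamma,w)$, I would proceed by direct construction. Fix an $\um$-displacement tableau $t$ and a class $[D] \in \TT(t)$. For each box $(x,y) \in \lambda$, I would build an effective divisor equivalent to $D$ (plus an appropriate multiple of $w$) realizing the vanishing at $w$ prescribed by $(x,y)$, using $t$ to decide, for each cycle, how many chips to place there and at what position. Because the chip positions on cycles where $t$ has a fixed symbol $i$ are constrained modulo the torsion $m_i$ by the displacement condition, the chip configurations prescribed by different boxes match up coherently, and the construction goes through.

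The reverse inclusion $W^\lambda(\Gamma,w) \subseteq \bigcup_{t \vdash_{\um} \lambda} \TT(t)$ is the substantive direction. Given $[D] \in W^\lambda(\Gamma,w)$, I would extract a filling $t$ of $\lambda$ from the reduced representatives of the divisors witnessing each ramification condition: for each box $(x,y)$, such a representative places a specific configuration of chips on the cycles of $\Gamma$, and we define $t(x,y)$ to be the index of the cycle on which the critical chip is spent to achieve the vanishing at $w$ required by $(x,y)$. The main steps are then to verify that (i) the filling $t$ is a valid tableau shape, namely weakly increasing across rows and strictly increasing up columns; (ii) the entries satisfy the $\um$-displacement condition encoded in $t \vdash_{\um} \lambda$; and (iii) the resulting class $[D]$ genuinely lies in $\TT(t)$.

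Step (ii) will be the main obstacle. Whenever two boxes carry the same symbol $i$, their chip positions on the $i$-th cycle are forced to coincide, and the displacement condition records exactly how this interacts with the torsion $m_i$ and the shape of $\lambda$. The key point to verify is that two ramification conditions assigned to the same cycle force the corresponding chip positions to differ by a specific lattice vector determined by the relative box coordinates, and this vector must vanish modulo $m_i$. Controlling this cross-box interaction inductively, so that the partial tableau built so far continues to constrain future choices consistently, is the heart of the argument, and is where the torsion profile $\um$ enters in an essential way. Once this is established, the Brill-Noether generality statements (Theorems \ref{thm_genericity1} and \ref{thm_genericity2}) should follow by counting valid $\um$-displacement tableaux of the relevant shapes and comparing to the dimension formula $\dim \TT(t) = g - |\lambda|$ in the repetition-free case.
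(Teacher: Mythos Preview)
Your outline is a plausible generalization of the \cite{cdpr} argument, but the paper takes a different and more streamlined route. Rather than working with $v$-reduced representatives and tracking chip-firing box by box, the paper introduces the \emph{Weierstrass partition} $\lambda_{\Gamma,w}(D)$ of a divisor and proves a single structural lemma (Lemma~\ref{lem_disp}): if $\Gamma$ is obtained from a graph $A$ by attaching one cycle $C$ with marked points $v,w$, and $p = \la{z}$ is a point on $C$, then $\lambda_{\Gamma,w}(D+p) = \disp^+(\lambda_{A,u}(D),\, z + m\ZZ)$. Iterating this from left to right over the chain (Corollary~\ref{c_computewp}) computes the Weierstrass partition of any divisor as a sequence of displacements. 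Both inclusions of Theorem~\ref{t_tori} then follow in a few lines: for $\TT(t) \subseteq W^\lambda$, one invokes a purely combinatorial monotonicity fact (Corollary~\ref{c_tabpart}); for the reverse, one sets $t(x,y) = \min\{i : (x,y) \in \lambda_i\}$ where $\lambda_i$ is the Weierstrass partition on the first $i$ cycles, and the displacement description immediately gives that $t$ is an $\um$-displacement tableau with $[D] \in \TT(t)$. In particular, the paper explicitly avoids $v_1$-reduced divisors (see the introduction), and the induction runs left to right, not right to left.

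Compared to your proposal, the paper's approach buys a crisp definition of $t(x,y)$ in place of your ``index of the cycle on which the critical chip is spent,'' and it collapses your Step~(ii) --- which you correctly flag as the main obstacle --- into an immediate consequence of the shape of $\disp^+$: all boxes added at step $i$ automatically have $x-y$ in a single congruence class mod $m_i$. Your approach could likely be made to work, but you would end up re-deriving the content of Lemma~\ref{lem_disp} in a less modular form. One small correction: displacement tableaux are \emph{strictly} increasing in both rows and columns, not weakly in one direction.
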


A consequence of this (Corollary \ref{c_dp}) is that we can compute the dimensions of the loci $W^\lambda(\Gamma,w)$ by determining the minimum number of distinct symbols in an $\um$-displacement tableau on $\lambda$. Together with semicontinuity results for tropicalization of algebraic curves (summarized in Section \ref{sec_trop}), we can deduce upper bounds on the dimensions of Brill-Noether varieties of (marked or unmarked) algebraic curves. Most importantly, choosing \textit{special} edge lengths in a chain of cycles opens new applications of the theory of linear series on metric graphs, as in our \cite{pfl2}. We also obtain a tropical proof of the generalized Brill-Noether theorem for algebraic curves with one marked point, originally proved by Eisenbud and Harris \cite[Theorem 4.5]{eh}.

\begin{thm}[Generalized Brill-Noether theorem]
\label{thm_gbn}

Let $(C,p)$ be a general marked algebraic curve of genus $g$ over an algebraically closed field of any characteristic. Let $r,d$ be positive integers such that $g-d+r \geq 0$, and let $\alpha = (\alpha_0,\alpha_1,\cdots,\alpha_r)$ be a nondecreasing sequence of nonnegative integers. Then the variety $W^{r,\alpha}_d(C,p) \subseteq \Pic^d(C)$ of line bundles of rank at least $r$ and ramification at least $\alpha$ at the point $p$ (see Definition \ref{def:wrad}) is nonempty if and only if the adjusted Brill-Noether number
$$
\rho(g,d,r,\alpha) = g - (r+1)(g-d+r) - \sum_{i=0}^r \alpha_i
$$
is nonnegative. If nonempty, this locus has dimension exactly $\rho(g,d,r,\alpha)$.
\end{thm}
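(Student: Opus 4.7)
The plan is to combine Theorems \ref{thm_genericity2} and \ref{t_tori} with semicontinuity of tropicalization. First I would encode the ramification data $(r,d,\alpha)$ as a partition $\lambda$ of height $r+1$, by setting $\lambda_i = (g-d+r) + \alpha_{r-i}$ for $0 \leq i \leq r$. Then $|\lambda| = (r+1)(g-d+r) + \sum_i \alpha_i$, so that $\rho(g,d,r,\alpha) = g - |\lambda|$. The translation $[L] \mapsto [L - d\cdot p]$ identifies $W^{r,\alpha}_d(C,p)$ with an algebraic analog $W^\lambda(C,p) \subseteq \Pic^0(C)$ of the tropical locus $W^\lambda(\Gamma,w)$.

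Next I would construct a regular smoothing of a chain of $g$ cycles, with a section through a marked point on the rightmost cycle, choosing edge lengths so that the skeleton $(\Gamma, w)$ has torsion profile $\um$ satisfying $m_i = 0$ or $m_i > i$ for each $i \in \{2,\ldots,g\}$. By Theorem \ref{thm_genericity2}, the marked graph $(\Gamma, w)$ is then Brill-Noether general: $W^\lambda(\Gamma,w)$ is empty whenever $\rho < 0$, and has dimension exactly $\rho$ when $\rho \geq 0$. The family can be arranged so that its generic fiber $(C,p)$ sweeps out a dense locus of marked curves of genus $g$, hence covers a general marked curve.

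The upper bound and emptiness statement both follow from the specialization properties of Brill-Noether loci summarized in the tropicalization section: every $[L] \in W^{r,\alpha}_d(C,p)$ tropicalizes to a class in $W^\lambda(\Gamma,w)$, and the specialization map does not increase dimension on irreducible components. Hence $W^{r,\alpha}_d(C,p) = \emptyset$ when $\rho < 0$, and $\dim W^{r,\alpha}_d(C,p) \leq \rho$ when $\rho \geq 0$. Combined with the classical determinantal fact that every nonempty component of $W^{r,\alpha}_d(C,p)$ has dimension at least $\rho$, this forces exact dimension $\rho$ whenever the algebraic locus is nonempty.

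The main obstacle is the existence statement when $\rho \geq 0$. For this I would exhibit an explicit $\um$-displacement tableau $t$ on $\lambda$ with at most $g$ distinct symbols, which is possible for a Brill-Noether general chain precisely when $|\lambda| \leq g$; by Theorem \ref{t_tori}, the associated torus $\TT(t) \subseteq W^\lambda(\Gamma,w)$ is then nonempty of dimension $\rho$. A general point of $\TT(t)$ should lift to a line bundle on $C$ via the standard lifting results for divisor classes from skeletons to curves (Baker's specialization lemma together with Cartwright--Jensen--Payne style liftings), and the lifted bundle will automatically lie in $W^{r,\alpha}_d(C,p)$ by semicontinuity of rank and ramification. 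The delicate step will be checking that the lift actually realizes the full prescribed ramification profile $\alpha$ at $p$ rather than some weaker one; this is exactly where the marked-point version of the tropical analysis developed in this paper, rather than merely the unmarked theory of \cite{cdpr}, becomes essential.
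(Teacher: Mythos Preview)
Your upper bound and emptiness argument---encode $(r,d,\alpha)$ as the partition $\lambda$, use Theorem~\ref{thm_genericity2} on a suitably chosen chain, and invoke the semicontinuity of Proposition~\ref{prop_semi}---is exactly what the paper does (via Theorem~\ref{thm_introalg} and Corollary~\ref{cor_gbn}). That half is fine.

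The divergence is in the existence step. You propose to exhibit a tropical class in $\TT(t)$ and \emph{lift} it to a line bundle on $C$ with the correct ramification, flagging this as ``the delicate step.'' The paper avoids this entirely. Nonemptiness of $W^\lambda(C,p)$ when $g \geq |\lambda|$ comes directly from the degeneracy-locus description in Section~\ref{sec_ag}: the expected class $\Theta^{|\lambda|}/\prod_{(x,y)\in\lambda}\hook(x,y)$ is nonzero in the Chow ring of $\Pic^0(C)$, so the locus cannot be empty (Proposition~\ref{prop_lowerbound}). This is the same determinantal input you already invoke for the lower bound on dimension---it gives nonemptiness for free, with no lifting needed. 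Your lifting route could in principle be made to work (and something like it appears in the Cartwright--Jensen--Payne literature), but verifying that a lift achieves the full ramification sequence $\alpha$ at $p$ is genuinely subtle and not addressed by anything in this paper; you would be importing a substantial external argument to replace a one-line appeal to intersection theory.

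A minor point: you don't need a family ``sweeping out a dense locus.'' One Brill-Noether general marked curve over a complete valued field suffices; the BN-general locus in $M_{g,1}$ is then Zariski open, and surjectivity onto $\mathrm{Spec}\,\ZZ$ handles arbitrary characteristic. This is how the paper phrases it in the proof of Corollary~\ref{cor_gbn}.
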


This theorem will follow from Corollary \ref{cor_gbn}. We now define and give the geometric interpretation of generalized Brill-Noether loci, in both the algebraic and tropical contexts.

\subsection{Brill-Noether loci of marked algebraic curves}  Let $C$ be a smooth projective algebraic curve of genus $g$, and let $d,r$ be nonnegative integers with $r \geq d-g$. Brill-Noether theory concerns the geometry of the schemes $W^r_d(C)$ parameterizing degree $d$ line bundles $L$ on $C$ such that $h^0(L) \geq r+1$. The reason that we require $r \geq d-g$ is that the Riemann-Roch formula guarantees that $h^0(L) \geq (d-g)+1$, regardless of the line bundle. If a marked point $p \in C$ is chosen, one can also impose ramification data at $p$. The \emph{vanishing orders} of a line bundle $L$ of rank $r$ (that is, $h^0(L) = r+1)$ at the point $p$ are the integers $a_0 < a_1 < \cdots < a_r$ such that there exists a section of $L$ vanishing to that order at $p$. Alternatively, one can define $a_i = \max \{ n:\ h^0( L (-n\cdot p) ) \geq r+1-i$. The \emph{ramification orders} are the nondecreasing sequence of integers $\alpha_0 \leq \alpha_1 \leq \cdots \leq \alpha_r$ defined by $\alpha_i = a_i - i$. We therefore make the following definition.

\begin{defn} \label{def:wrad}
Let $(C,p)$ be a smooth projective curve with a marked point. The variety of line bundles of rank at least $r$ with ramification at least $\alpha$ at $p$ is 
$$
W^{r,\alpha}_d(C,p) = \{ L \in \Pic^d(C):\ h^0(L( - (\alpha_i+i) p ) ) \geq r+1 - i\ \mbox{for }i = 0,1,\cdots,r\}.
$$
\end{defn}

To the data $g,r,d,\alpha$, we associate a partition $(\lambda_0,\cdots,\lambda_r)$ as follows (see Figure \ref{fig:wrad_lambda} for a visual illustration).
$$
\lambda_i = (g-d+r) + \alpha_{r-i}
$$

\begin{figure}
\begin{tikzpicture}[scale=0.5]
\draw (0,0) rectangle (6,4);
\foreach \i in {1,2,...,5} \draw (\i,0) -- (\i,4);
\foreach \j in {1,2,3} \draw (0,\j) -- (6,\j);
\draw[snake=brace,thick] (6,-0.25) -- (0,-0.25) node[midway,below] {$g-d+r$};
\draw[snake=brace,thick] (-0.25,0) -- (-0.25,4) node[midway,left] {$r+1$};
\begin{scope}[xshift=0.4cm]
\draw (6,0) rectangle (9,1); \draw (7,0) -- (7,3); \draw (8,0) -- (8,1);
\draw (6,1) rectangle (8,2);
\draw (6,2) rectangle (8,3);
\draw (6,3) rectangle (6,4);
\draw (9,0) node[above right] {$\alpha_r$};
\draw (8,1) node[above right] {$\vdots$};
\draw (8,2) node[above right] {$\alpha_1$};
\draw (6,3) node[above right] {$\alpha_0$};
\end{scope}
\end{tikzpicture}
\caption{The partition $\lambda$ associated to the data $g,d,r,\alpha$. In this example, $r=3,\ d=g-3$, and $\alpha = (0,2,2,3)$.} \label{fig:wrad_lambda}
\end{figure}
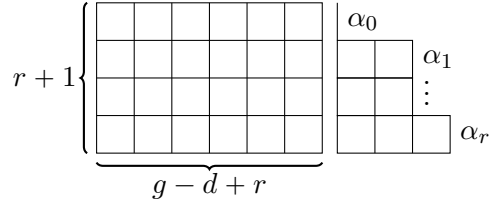

An alternative description that we find useful is that $\lambda$ is the minimal partition whose Young diagram (regarded as a subset of $\ZZ^2_{>0}$) contains all of the points in
$$\{ (g-d+r + \alpha_i, r+1-i):\ i =0,1,\cdots,r\} \cap \ZZ^2_{>0}.$$

Observe that many such varieties $W^{r,\alpha}_d(C,p)$ are automatically isomorphic: if $d$ is increased by $1$, as are all of the integers $\alpha_i$ (that is, if a \emph{base point} is added at $p$), the result is an isomorphic variety, now in $\Pic^{d+1}(C)$, obtained by applying the map $L \mapsto L(p)$ to the original. Therefore there is substantial redundancy in studying all possible data $g,r,d,\alpha$. However, note that any two sets of data $(g,r,d,\alpha)$ that are isomorphic for the reason described give the same partition $\lambda$ (the difference is simply where the gap is drawn in Figure \ref{fig:wrad_lambda}). Indeed, we can identify both with a single subscheme of $\Pic^0(C)$, defined as follows.

\begin{defn} \label{def:wl_alg}
The \emph{Brill-Noether locus} corresponding to the partition $\lambda$ and the marked curve $(C,p)$ is
\begin{align*}
W^\lambda(C,p) =& \{ L \in \Pic^0(C):\ h^0( L(d' \cdot p) ) \geq r'+1 \\&\mbox{ whenever } (g-d'+r',r'+1) \in \lambda\}.
\end{align*}
\end{defn}

\begin{rem}
If $h^0(L(d \cdot p)) \geq r+1$, then it follows automatically that $h^0(L((d+1) \cdot p)) \geq r+1$ and $h^0(L((d-1) \cdot p)) \geq r$. In other words, once $(x,y) \in \lambda$, there are no additional restrictions imposed by $(x-1,y)$ or $(x,y-1)$. This explains why there is no loss of generality in requiring $\lambda$ to be a partition, i.e. to be closed leftward and downward. Also, the Riemann-Roch theorem guarantees that if $r<0$ or $g-d+r \leq 0$, then the condition $h^0(L(d\cdot p)) \geq r+1$ holds for \emph{all} line bundles $L \in \Pic^0(C)$, so there is no loss of generality in only allowing $\lambda$ to have points with both coordinates positive. 
\end{rem}

Comparing Definition \ref{def:wrad} and Definition \ref{def:wl_alg}, it follows that the map $L \mapsto L(-d\cdot p )$ gives an isomorphism $W^{r,\alpha}_d(C,p) \rightarrow W^\lambda(C,p)$, where $\lambda$ is the partition associated to $g,r,d,\alpha$ as described above. Note in particular that when $\lambda$ is a rectangle (i.e. the minimal partition containing $(g-d+r,r+1)$), we have $W^\lambda(C,p) \cong W^r_d(C)$ (regardless of the marked point $p$). Therefore it suffices to study the geometry of $W^\lambda(C,p)$. We hope that the results of this paper will convince the reader that indexing Brill-Noether varieties (with one marked point) by the partition $\lambda$ is the most natural way to study their geometry.

The scheme structure on $W^\lambda(C,p)$ is defined as an intersection of degeneracy loci of certain maps of vector bundles over $\Pic^0(C)$. We summarize this description in Section \ref{sec_ag}, where we also deduce from intersection theory that $\dim W^\lambda(C,p) \geq g - |\lambda|$, with equality if and only if the class of $W^\lambda(C,p)$ (either in the Chow group or in singular cohomology) is

\begin{equation} \label{eq_class}
[W^\lambda(C,p)] = \left. \Theta^{|\lambda|} \middle/ \prod_{(x,y) \in \lambda} \hook(x,y). \right.
\end{equation}

In formula \eqref{eq_class}, $\hook(x,y)$ denotes the hook length of the box $(x,y)$ in the Young diagram of $\lambda$, i.e. the total number of boxes in $\lambda$ that are either of the form $(x',y)$ with $x' \geq x$ or $(x,y')$ with $y' \geq y$. In particular, since $\deg \Theta^g = g!$, this formula shows in the case $g = |\lambda|$ that if $W^\lambda(C,p)$ is $0$-dimensional, then its cardinality is equal, by the hook length formula \cite{frame}, to the number of standard Young tableaux on $\lambda$. Observe in particular that equation \eqref{eq_class} does not depend on $g$ at all, but only on $\lambda$; we offer this as evidence that $\lambda$ is the most natural combinatorial object with which to parameterize Brill-Noether varieties of marked curves.

Theorem \ref{thm_gbn} can therefore be restated to say that, for a general point $(C,p)$ in $\mathcal{M}_{g,1}$ over an algebraically closed field, the dimension of $W^\lambda(C,p)$ is always equal to $g - |\lambda|$ (or $W^\lambda(C,p)$ is empty if $|\lambda| > g$), and formula \eqref{eq_class} gives the class of this locus. This will be proved as Corollary \ref{cor_gbn}.

\subsection{Chains of cycles}
If $\Gamma$ is a metric graph, $w \in \Gamma$ is a point, and $\lambda$ is a partition, then we define a generalized Brill-Noether locus $W^\lambda(\Gamma,w) \subseteq \Pic^0(\Gamma)$ in the same manner as for marked algebraic curves. Here $r(\cdot)$ denotes the Baker-Norine rank of a divisor (see \cite{mz} or \cite{gk}).
\begin{align*}
\wl(\Gamma,w) =& \{ [D] \in \Pic^0(\Gamma):\ r(D + d'\cdot p) \geq r' \\& \mbox{ whenever } (g-d'+r',r'+1) \in \lambda \}
\end{align*}

We will consider a metric graph $\Gamma$ composed of a chain of cycles as shown in Figure \ref{fig_chain}. We require that $v_i$ and $w_i$ are \emph{distinct} points for each $i$. The point $w_g$ will serve as the marked point. We have drawn the chain with bridges from $w_i$ to $v_{i+1}$ to make some of our notation simpler, but all of our results are unaffected if the bridges have length $0$.

\begin{figure} 
\begin{center}
\begin{tikzpicture}
\draw (0,0) circle[radius=1];
\coordinate (w1) at ({cos(70)},{sin(70)});
\coordinate (v1) at ({cos(110)},{sin(110)});

\draw (3,0) circle[radius=1];
\coordinate (w2) at ({3+cos(70)},{sin(70)});
\coordinate (v2) at ({3+cos(110)},{sin(110)});

\draw (7,0) circle[radius=1];
\coordinate (wg) at ({7+cos(70)},{sin(70)});
\coordinate (vg) at ({7+cos(110)},{sin(110)});

\draw[fill=black] (w1) circle[radius=0.05];
\draw[fill=black] (v1) circle[radius=0.05];
\draw[fill=black] (w2) circle[radius=0.05];
\draw[fill=black] (v2) circle[radius=0.05];
\draw[fill=black] (wg) circle[radius=0.05];
\draw[fill=black] (vg) circle[radius=0.05];

\draw[rounded corners = 2ex] (w1) -- (1,1.25) -- (2,1.25) -- (v2);
\draw[rounded corners = 2ex] (w2) -- (4,1.25) -- (4.25,1.25);
\draw[rounded corners = 2ex] (5.75,1.25) -- (6,1.25) -- (vg);
\draw (5,1.25) node {$\cdots$}; 
\draw (5,0) node {$\cdots$};

\draw (v1) node[above] {$v_1$};
\draw (w1) node[above] {$w_1$};
\draw (v2) node[above] {$v_2$};
\draw (w2) node[above] {$w_2$};
\draw (vg) node[above] {$v_g$};
\draw (wg) node[above] {$w_g$};
\end{tikzpicture}
\end{center}
\caption{The chain of cycles $\Gamma$.}\label{fig_chain}
\end{figure}
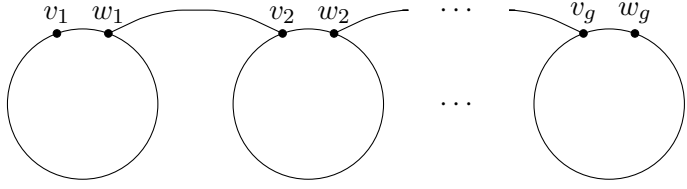

We will show that all of the information about the Brill-Noether loci of $(\Gamma,w_g)$ is encoded in a sequence $\um = (m_2, \cdots, m_g)$ of $g-1$ nonnegative integers, called the \emph{torsion profile} of $\Gamma$. The integer $m_i$ will be called the \emph{$i$th torsion order} and is defined as follows.

\begin{defn} \label{def:tp}
Let $\ell_i$ denote the length of the $i$th cycle, and let $\ell(v_i w_i)$ denote the length of the clockwise edge from $v_i$ to $w_i$. If $\ell(v_i w_i)$ is an irrational multiple of $\ell_i$, then the $i$th torsion order $m_i$ is $0$. Otherwise, $m_i$ is the minimum positive integer such that $m_i \cdot \ell(v_i w_i)$ is an integer multiple of $\ell_i$.
\end{defn}

An equivalent description (which explains the name) is that $m_i$ identifies precisely which multiples of the divisor $w_i - v_i$ are linearly equivalent to $0$. Note that $m_i \neq 1$, since we prohibit $v_i = w_i$.

We summarize this notation in the following situation, for easy reference.

\begin{sit} \label{sit_comb}
Let $\Gamma$ be a chain of cycles with points $v_i,w_i$ as shown in Figure \ref{fig_chain}, and let $m_i$ (for $i=1,2,\cdots,g$) denote the $i$th torsion order of this chain. The \emph{torsion profile} of $\Gamma$ is denoted by $\um = (m_2,\cdots,m_g)$.
\end{sit}

Although we have defined $m_i$ for all $i \in \{1,2,\cdots,g\}$, note that we do not include the first torsion order $m_1$ in the torsion profile $\um$. This is because the value of $m_1$ is determined only by the position of $v_1$ on the first cycle, which in immaterial to the properties of the marked metric graph $(\Gamma,w_g)$.

The Jacobian $\Pic^0(\Gamma)$ is naturally isomorphic to the product of all $g$ cycles of $\Gamma$ (see Lemma \ref{l_stdform} for an explicit isomorphism that we will be the basis of our constructions). Our main result on the structure of $\wl(\Gamma,w_g)$, Theorem \ref{t_tori}, states that $\wlgw$ is always equal to a union of sub-tori, each given by fixing a point on some of the cycles but not others. These tori are in bijection with $\um$-displacement tableaux, which depend on the torsion profile.

The proof of Theorem \ref{t_tori} is largely similar to the method in \cite{cdpr}: special divisors are constructed one cycle at a time, by placing a single chip on each cycle. The choice of where to place the chip of cycle $i$ is determined in \cite{cdpr} by the data of a \emph{lingering lattice path}, which can in turn be specified by a Young tableau on $\lambda$. We give a slightly different construction of divisors from Young tableaux. Unlike in \cite{cdpr}, the divisors we produce are not necessary $v_1$-reduced divisors. However, the dependence of the chip positions on the tableau is more transparent. The idea underlying our construction is that we construct, inductively, divisors on a \emph{sequence of chains} $\Gamma_1,\cdots, \Gamma_g = \Gamma$ of genera $1,2,\cdots, g$, each obtained by taking only the first $i$ cycles of $\Gamma$. Each divisor (when twisted by a multiple of $w_i$ until it has degree $0$) lies in a Brill-Noether locus $W^{\lambda_i}(\Gamma_i,w_i)$, where the partitions $\lambda_i$ form an increasing sequence wherein the box $(x,y)$ is added to the partition $\lambda_i$ if and only if $(x,y)$ has label $i$ in the chosen tableau. Even in the case where we ultimately care only about the loci $W^r_d(\Gamma)$, the more general loci $W^\lambda(\Gamma,w_g)$ are essential, as they are used to record the intermediate steps in this construction. In this way, our method provides a geometric interpretation of the lingering lattice paths of \cite{cdpr}: a specific lingering lattice path records the precise way in which the ramification conditions grow as cycles are attached to $\Gamma$.

\begin{rem}
There are two contexts in algebraic geometry very similar to divisor theory on chains of cycles: limit linear series on chains of elliptic curves and limit linear series (as defined for curves of pseudo-compact type in \cite{oss14dim}) on stable curves formed as a chain of pairs of rational curves joined at two points. In both cases, we suspect that a similar combinatorial analysis should yield a description of the dimension of these spaces of limit linear series, and a procedure to enumerate their irreducible components. Osserman discusses the link between the latter context and that of tropical chains of cycles in \cite{oss14dim}.
\end{rem}

\subsection{Tropicalization}

The interplay between algebraic and tropical geometry will take place in the following situation.

\begin{sit} \label{sit_main}
In addition to the notation of Situation \ref{sit_comb}, let $K$ be a complete valued field with infinite residue field, and let $C$ be a smooth projective curve over $K$. Assume that $C$ has totally split reduction and minimal skeleton isometric to $\Gamma$, and that $w_g$ is rational over the value group of $K$. Let $p \in C$ be a $K$-point specializing to $w_g \in \Gamma$.
\end{sit}

We show in Section \ref{sec_trop} that Theorem \ref{thm_genericity2} implies the following result on algebraic curves.

\begin{thm} \label{thm_introalg}
In Situation \ref{sit_main}, if the torsion profile $\um$ of $\Gamma$ satisfies $m_i = 0$ or $m_i > i$ for all $i$, then $(C,p)$ is Brill-Noether general (as a marked algebraic curve).
\end{thm}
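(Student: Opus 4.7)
The plan is to transfer the combinatorial conclusion of Theorem \ref{thm_genericity2} across the specialization apparatus reviewed in Section \ref{sec_trop}. By hypothesis, the torsion profile $\um = (m_2, \ldots, m_g)$ satisfies $m_i = 0$ or $m_i > i$ for every $i$, so Theorem \ref{thm_genericity2} asserts that $(\Gamma, w_g)$ is Brill--Noether general as a marked metric graph: for every partition $\lambda$ with $|\lambda| \leq g$ we have $\dim W^\lambda(\Gamma, w_g) = g - |\lambda|$, while $W^\lambda(\Gamma, w_g)$ is empty whenever $|\lambda| > g$.

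Next, specialization of sections from $C$ to $\Gamma$ respects Baker--Norine rank, and applying this box by box to the defining inequalities of $W^\lambda$ shows that tropicalization carries the analytification of $W^\lambda(C,p)$ into $W^\lambda(\Gamma, w_g)$. The dimension-theoretic properties of tropicalization summarized in Section \ref{sec_trop} then yield
$$\dim W^\lambda(C,p) \leq \dim W^\lambda(\Gamma, w_g)$$
for every $\lambda$. Combining the two inputs, if $|\lambda| > g$ then emptiness of $W^\lambda(\Gamma, w_g)$ forces emptiness of $W^\lambda(C, p)$, and if $|\lambda| \leq g$ then $\dim W^\lambda(C, p) \leq g - |\lambda|$.

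For the matching lower bound I appeal to the intersection-theoretic description of $W^\lambda(C,p)$ outlined after Definition \ref{def:wl_alg}: it is cut out in $\Pic^0(C)$ by a family of degeneracy loci whose expected class is the nonzero multiple of $\Theta^{|\lambda|}$ displayed in \eqref{eq_class}. Standard existence and connectedness theorems for such degeneracy loci on abelian varieties imply that $W^\lambda(C, p)$ is nonempty whenever $g - |\lambda| \geq 0$ and that every component then has dimension at least $g - |\lambda|$. Matching the upper and lower bounds gives $\dim W^\lambda(C, p) = g - |\lambda|$ in this range, which is precisely the assertion that $(C, p)$ is Brill--Noether general.

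The main obstacle is the semicontinuity step: one must verify that the containment of tropicalizations and the resulting dimension inequality genuinely hold for all partition-indexed loci, not merely the classical $W^r_d$. This reduces to the box-by-box application of the known specialization of Baker--Norine rank from $C$ to $\Gamma$, using that $p$ specializes to $w_g$ and that $w_g$ is rational over the value group, as assumed in Situation \ref{sit_main}, so that each translated condition $r(D + d' \cdot p) \geq r'$ on $C$ specializes to the corresponding condition on $\Gamma$.
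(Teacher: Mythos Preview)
Your proposal is correct and follows essentially the same route as the paper: combine Theorem \ref{thm_genericity2} with the semicontinuity facts of Proposition \ref{prop_semi} to obtain the upper bound $\dim W^\lambda(C,p) \leq g - |\lambda|$ (and emptiness when $|\lambda|>g$), and invoke the degeneracy-locus lower bound (Proposition \ref{prop_lowerbound}) to finish. The paper's own proof is the one-line citation ``Theorem \ref{thm_genericity2} and Proposition \ref{prop_semi},'' leaving the lower bound implicit from Section \ref{sec_ag}; your write-up simply makes that step explicit. One small remark: the mention of ``connectedness theorems'' is extraneous---only the nonemptiness and dimension lower bound for degeneracy loci are used.
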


The generalized Brill-Noether Theorem \ref{thm_gbn} will follow, via Corollary \ref{cor_gbn}.

\subsection{Organization of the paper}
We describe displacement tableaux in Section \ref{sec_dt}. The core of the paper is Section \ref{sec_comb}, in which we parameterize the Brill-Noether loci on marked chains of cycles and prove our main result, Theorem \ref{t_tori}, which establishes a correspondence between the components of $\wlgw$ and displacement tableaux. We also deduce Theorems \ref{thm_genericity1} and \ref{thm_genericity2}. Section \ref{sec_ag} summarizes the basic facts about the scheme structure of $W^\lambda(C,p)$ in the algebraic case, and its analysis via intersection theory. Section \ref{sec_trop} proves two semicontinuity properties needed to derive results in algebraic geometry from our tropical results, which we apply to prove Theorem \ref{thm_introalg} and complete our tropical proof of the generalized Brill-Noether Theorem \ref{thm_gbn}. Finally, Section \ref{sec_qu} states some further questions suggested by our work.

\subsection*{Acknowledgements}
The key ideas for this paper came about through conversations with Melody Chan, David Jensen, and Sam Payne. The author is also grateful to Dhruv Ranganathan and Nicola Tarasca for several helpful conversations.

\section{Displacement tableaux} \label{sec_dt}

The description in \cite{cdpr} of special divisors on generic chains gave a bijection between irreducible components of Brill-Noether loci and standard Young tableaux. Our description requires the following generalization of standard tableaux, which incorporates the possibility of special torsion profiles by allowing some symbols from the alphabet to occur more than once.

\begin{defn} \label{def:dt}
Let $\lambda$ be a partition, and let $\um = (m_2,\cdots,m_g)$ be a $(g-1)$-tuple of nonnegative integers. An \emph{$\um$-displacement tableux} on $\lambda$ is a function $t:\ \lambda \rightarrow \{1,2,\cdots,g\}$ satisfying the following properties.
\begin{enumerate}
\item $t$ is strictly increasing in any given row or column of $\lambda$.
\item For any two distinct boxes $(x,y),(x',y')$ in $\lambda$ such that $t(x,y) = t(x',y')$, the following congruence holds. $$x-y \equiv x'-y' \pmod{m_{t(x,y)}}$$
\end{enumerate}
We write $t \vdash_{\um} \lambda$ to denote that $t$ is an $\um$-displacement tableau on $\lambda$.
\end{defn}

Observe that if $t$ satisfies property (1), then it is impossible for two different boxes in $t$ to have the label $1$. Therefore property (2) will never refer to the (unspecified) value of $m_1$.

\begin{eg}
The tableau $$\young(6,3456,1234)$$ is a $(0,3,3,0,2)$-displacement tableau. The only repeated symbols are $3$, $4$, and $6$, each of which occurs twice, in two places separated by a lattice distances $3$, $3$, and $4$, respectively. Since $m_3 \mid 3$, $m_4 \mid 3$, and $m_6 \mid 4$, this is a $(0,3,3,0,2)$-displacement tableau.
\end{eg}

\begin{eg}
Let $\underline{0}$ denote the $(g-1)$-tuple $(0,0,\cdots,0)$. Then a $\underline{0}$-displacement tableau is the same thing as a standard Young tableau with alphabet $\{1, 2, \cdots, g\}$ (note that the alphabet may have more elements than $\lambda$ has boxes, so not all symbols from the alphabet need to appear).
\end{eg}

The following notation will be convenient.

\begin{notn}
Let $\lambda$ be a partition (identified, as usual, with a subset of $\ZZ_{>0} \times \ZZ_{>0}$). Let $\overline{\lambda}$ denote the union of $\lambda$ with all points $(x,y) \in \ZZ \times \ZZ$ such that either $x \leq 0$ or $y \leq 0$.
\end{notn}

\begin{rem}
In some sense, $\overline{\lambda}$ is the more natural set in our application, since the Riemann-Roch formula guarantees that if either coordinate of the point $(g-d+r,r+1)$ is nonpositive, then \emph{all} degree $d$ divisors have rank at least $r$.
\end{rem}

A displacement tableau $t$ may be interpreted as assembly instructions for the partition $\lambda$: boxes are added in $g$ successive steps to produce a sequence of partitions. The permissible boxes to add at any given step are governed by the notion of displacement of a partition, which we now define (see Lemma \ref{lem_disp} for the origin of displacement in our application).

\begin{defn}
Let $\lambda$ be a partition, and let $S$ be a set of integers. The \emph{upward displacement}  of $\lambda$ by $S$ is the partition
$$\disp^+(\lambda,S) = \lambda \cup L,$$
where $L$ is the set of boxes $(x,y) \not\in \lambda$ such that  $x-y \in S$, $(x-1,y) \in \overline{\lambda}$, and $(x,y-1) \in \overline{\lambda}$. The elements of $L$ are called the \emph{loose boxes of $\lambda$ with respect to $S$}.
 \end{defn}

The displacement process is easy to visualize: each element of $z \in S$ causes a slope-$1$ line through $(x,y) = (z,0)$ to be drawn through the Young diagram. Wherever these lines meet an inward corner, they turn that corner outwards, adding a box to the partition. Here, we regard the four corners of the box $(x,y)$ as being the points $(x,y),(x-1,y),(x-1,y-1)$, and $(x,y-1)$ in the plane.

\begin{eg}
Let $\lambda = (7,6,5,1)$, and let $S = 1 + 3 \ZZ$. Then $\disp^+(\lambda,S) = (8,6,5,2)$, as illustrated in the figure below.
\begin{center}
\begin{tikzpicture}[scale=0.4]
\draw (0,4) -- (0,0) -- (7,0);
\draw (0,4) -- (1,4) -- (1,0);
\draw (0,3) -- (5,3) -- (5,0);
\draw (2,3) -- (2,0);
\draw (3,3) -- (3,0);
\draw (4,3) -- (4,0);
\draw (0,2) -- (6,2) -- (6,0);
\draw (0,1) -- (7,1) -- (7,0);
\draw[style=ultra thick, <->] (0,6) -- (0,4) -- (1,4) -- (1,3) -- (5,3) -- (5,2) -- (6,2) -- (6,1) -- (7,1) -- (7,0) -- (9,0);
\draw[style=dotted,thick,->] (-1,4) -- (2,7);
\draw[style=dotted,thick,->] (-1,1) -- (5,7);
\draw[style=dotted,thick,->] (0,-1) -- (8,7);
\draw[style=dotted,thick,->] (3,-1) -- (11,7);
\draw[style=dotted,thick,->] (6,-1) -- (11,4);
\draw[style=dotted,thick,->] (9,-1) -- (11,1);
\draw[pattern=north west lines,pattern color=gray] (1,3) rectangle (2,4);
\draw[pattern=north west lines,pattern color=gray] (7,0) rectangle (8,1);
\end{tikzpicture}
\end{center}
\end{eg}

\begin{lemma} \label{l_dispmono}
If $\lambda,\lambda'$ are two partitions such that $\lambda \subseteq \lambda'$ and $S$ is any set of integers, then $\disp^+(\lambda,S) \subseteq \disp^+(\lambda',S)$.
\end{lemma}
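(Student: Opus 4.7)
The plan is a direct unwinding of the definitions, with a small case split on whether a point of $\disp^+(\lambda,S)$ already lies in $\lambda$ or is a loose box added in the displacement step. No deeper machinery should be needed; the only subtlety to keep track of is the role of $\overline{\lambda}$ in the definition, and the observation that the operation $\lambda \mapsto \overline{\lambda}$ is itself monotonic.

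First I would record the trivial monotonicity $\overline{\lambda} \subseteq \overline{\lambda'}$: since $\overline{\lambda}$ is $\lambda$ together with the fixed set $\{(x,y) : x \leq 0 \text{ or } y \leq 0\}$, inclusion $\lambda \subseteq \lambda'$ immediately gives inclusion of the bars. Then I would pick an arbitrary $(x,y) \in \disp^+(\lambda,S)$ and split into two cases. If $(x,y) \in \lambda$, then $(x,y) \in \lambda' \subseteq \disp^+(\lambda',S)$ and we are done.

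The remaining case is that $(x,y)$ is a loose box of $\lambda$ with respect to $S$, i.e.\ $(x,y) \notin \lambda$, $x - y \in S$, and both $(x-1,y)$ and $(x,y-1)$ lie in $\overline{\lambda}$. If $(x,y)$ happens to already lie in $\lambda'$, there is nothing to prove. Otherwise $(x,y) \notin \lambda'$, the condition $x-y \in S$ is unchanged, and by the monotonicity of the bar operation, $(x-1,y),(x,y-1) \in \overline{\lambda} \subseteq \overline{\lambda'}$. Hence $(x,y)$ is a loose box of $\lambda'$ with respect to $S$, so $(x,y) \in \disp^+(\lambda',S)$.

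There is essentially no obstacle here; the only place one could go wrong is forgetting that a loose box of $\lambda$ might already belong to $\lambda'$ (in which case it is absorbed into $\lambda'$ rather than added as a loose box), but this case is handled trivially. The lemma is really just the statement that the rule defining loose boxes depends on $\lambda$ only through $\overline{\lambda}$ in a monotone way.
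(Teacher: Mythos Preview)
Your proof is correct and follows essentially the same approach as the paper's. The paper's version is slightly more streamlined: rather than splitting first on whether $(x,y) \in \lambda$, it immediately restricts attention to boxes of $\disp^+(\lambda,S)$ not already in $\lambda'$ and observes that such a box must have been loose in $\lambda$ (since it is not in $\lambda$ either), then uses the same monotonicity of $\overline{\lambda}$ that you recorded.
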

\begin{proof}
Suppose that $(x,y)$ is a box in $\disp^+(\lambda,S)$ that is not contained in $\lambda'$. Then certainly it is not contained in $\lambda$, so it was loose in $\lambda$. Therefore $(x-1,y)$ and $(x,y-1)$ both lie in $\overline{\lambda}$, and hence in $\overline{\lambda'}$. Thus $(x,y)$ was also loose in $\lambda'$, so it lies in $\disp^+(\lambda',S)$. Therefore every box in $\disp^+(\lambda,S)$ also lies in $\disp^+(\lambda',S)$.
\end{proof}

Displacement of partitions and displacement tableaux are related by the following combinatorial fact.

\begin{cor} \label{c_tabpart}
Let $t$ be an $\um$-displacement tableau on a partition $\lambda$, and let $S_1,S_2,\cdots,S_g$ be sets of integers such that for all boxes $(x,y) \in \lambda$, $x-y \in S_{t(x,y)}$. Define a sequence of partitions $\lambda_0,\lambda_1,\cdots,\lambda_g$ inductively as follows.
\begin{align*}
\lambda'_0 &= \emptyset\\
\lambda'_{i+1} &= \disp^+(\lambda'_i, S_{i+1})
\end{align*}
Then $\lambda \subseteq \lambda'_g$.
\end{cor}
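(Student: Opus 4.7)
The plan is to prove the corollary by induction on $i$, using a natural filtration of $\lambda$ by the labels of $t$. For each $i \in \{0,1,\dots,g\}$, let $\lambda_i \subseteq \lambda$ denote the set of boxes $(x,y) \in \lambda$ with $t(x,y) \leq i$. The strict row/column increase property (1) of a displacement tableau guarantees that each $\lambda_i$ is itself a partition (a sub-Young-diagram of $\lambda$), with $\lambda_0 = \emptyset$ and $\lambda_g = \lambda$. I will prove by induction on $i$ the stronger statement $\lambda_i \subseteq \lambda'_i$.

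The base case is immediate. For the inductive step, assume $\lambda_i \subseteq \lambda'_i$. By Lemma \ref{l_dispmono}, $\disp^+(\lambda_i, S_{i+1}) \subseteq \disp^+(\lambda'_i, S_{i+1}) = \lambda'_{i+1}$, so it suffices to show $\lambda_{i+1} \subseteq \disp^+(\lambda_i, S_{i+1})$. The only new boxes in $\lambda_{i+1} \setminus \lambda_i$ are those $(x,y) \in \lambda$ with $t(x,y) = i+1$; I need to check that each such box is loose in $\lambda_i$ with respect to $S_{i+1}$. For the arithmetic condition, $x-y \in S_{i+1}$ holds by hypothesis on the $S_j$'s. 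For the two corner conditions: if $x = 1$ then $(x-1,y) = (0,y)$ lies in $\overline{\lambda_i}$ by definition, and otherwise $(x-1,y) \in \lambda$ and $t(x-1,y) < t(x,y) = i+1$ by the row-strictness of $t$, so $(x-1,y) \in \lambda_i \subseteq \overline{\lambda_i}$. The same argument using column-strictness handles $(x, y-1)$. Thus $(x,y)$ is loose in $\lambda_i$ with respect to $S_{i+1}$, completing the induction. Applied at $i = g$, this yields $\lambda = \lambda_g \subseteq \lambda'_g$.

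There is no significant obstacle here; the proof is essentially a direct verification, with the one subtle point being the need to treat boundary boxes (those with $x=1$ or $y=1$) using the enlarged region $\overline{\lambda_i}$ rather than $\lambda_i$ itself. It is worth noting that the congruence condition (2) of Definition \ref{def:dt} plays no role in this particular corollary — the argument uses only the row/column monotonicity (1) together with the hypothesis relating $t$ to the sets $S_j$. The role of condition (2) will only emerge downstream, when the sets $S_j$ are chosen to be arithmetic progressions modulo $m_j$ and one wants multiple boxes with the same label to lie in a single $S_j$.
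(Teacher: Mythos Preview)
Your proof is correct and follows essentially the same approach as the paper's: define $\lambda_i = t^{-1}(\{1,\dots,i\})$, verify that boxes with label $i+1$ are loose in $\lambda_i$ with respect to $S_{i+1}$ so that $\lambda_{i+1} \subseteq \disp^+(\lambda_i,S_{i+1})$, and then use Lemma~\ref{l_dispmono} and induction to conclude $\lambda_i \subseteq \lambda'_i$. Your version is simply more explicit about the boundary cases $x=1$ or $y=1$ and about the observation that condition~(2) of Definition~\ref{def:dt} is not needed here.
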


\begin{proof}
Let $\lambda_i = t^{-1}(\{1,2,\cdots,i\})$ be the sub-partition of $\lambda$ given by boxes with labels less than or equal to $i$. By definition of displacement tableaux, all boxes with label $i+1$ are loose in $\lambda_i$ with respect to the set $S_i$. Therefore $\lambda_{i+1} \subseteq \disp^+(\lambda_i,S_{i+1})$. It follows from Lemma \ref{l_dispmono} and induction on $i$ that $\lambda_i \subseteq \lambda'_i$ for all $i$.
\end{proof}

\section{Combinatorics of $\wl(\Gamma,w_g)$} \label{sec_comb}

Let $(\Gamma,w_g)$ be as in Situation \ref{sit_comb}. The language of displacement tableaux allows a succinct and explicit parameterization (Theorem \ref{t_tori}) of all Brill-Noether loci $\wlgw$. We will use the following notation to conveniently denote points of the cycles of $\Gamma$.

\begin{defn}
In Situation \ref{sit_comb}, for any $\xi \in \RR$ let $\la{\xi}_i$ denote the point on the $i$th cycle that is located $\xi \cdot \ell(v_iw_i)$ units clockwise from $w_i$, where $\ell(v_iw_i)$ denotes the length of the clockwise edge from $v_i$ to $w_i$.
\end{defn}

\begin{rem} \label{r_lanot}
In this notation,
\begin{enumerate}
\item The points $v_i$ and $w_i$ are equal to $\la{-1}_i$ and $\la{0}_i$, respectively;
\item If $n_1,n_2$ are \emph{integers}, then $\la{n_1}_i = \la{n_2}_i$ if and only if $n_1\equiv n_2 \pmod{m_i}$, where $m_i$ denotes the torsion order on the $i$th cycle.
\end{enumerate}
\end{rem}

The following lemma gives a convenient bijection between $\Pic^d(\Gamma)$ and the product of the $g$ cycles of $\Gamma$.

\begin{lemma} \label{l_stdform}
Let $D$ be any divisor of degree $d$ on $\Gamma$. Then $D$ is linearly equivalent to a unique divisor of the form $$\sum_{i=1}^g \la{\xi_i}_i + (d-g)\cdot w_g.$$
\end{lemma}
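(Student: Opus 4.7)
The plan is to prove existence and uniqueness of the representative separately.

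For existence, I would run a chip-firing reduction that processes the cycles from left to right. Any chip in the interior of a bridge is first pushed to one of its endpoints by firing with a PL function that is constant on one side, linear through the chip's location, and constant on the other side, so we may assume all chips lie on the cycles together with the points $v_i, w_i$. Then iteratively for $i = 1, 2, \ldots, g$, I apply a linear equivalence supported on the $i$th cycle $C_i$ (extended to be constant on each connected component of $\Gamma \setminus C_i$). The key input is the standard one-cycle fact: on a circle with chosen base point $w$, any degree-$k$ divisor is linearly equivalent to a unique divisor of the form $p + (k-1) w$. Applying this with base point $w_i$ collapses the chips currently on $C_i$ to a single chip $\la{\xi_i}_i$ plus excess chips at $w_i$; the excess chips are then pushed along the bridge to $v_{i+1}$ by a similar chip-firing and absorbed into the next iteration. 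After the $g$th step, the remaining $d-g$ chips rest at $w_g$.

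For uniqueness, it suffices to show that the map
$$\phi : C_1 \times \cdots \times C_g \to \Pic^g(\Gamma), \qquad (p_1, \ldots, p_g) \mapsto [p_1 + \cdots + p_g]$$
is injective; surjectivity is existence in degree $g$, and the general case follows by translating by $(d-g) w_g$. Suppose $\sum_i \la{\xi_i}_i \sim \sum_i \la{\xi_i'}_i$ via a PL function $f$. Because the difference divisor is supported only on the cycles, $f$ is linear on each bridge. Propagating the bridge slopes along the chain via the divisor vanishing conditions at each $v_i, w_i$, and using the fact that there is no bridge past $v_1$ or $w_g$ to absorb a slope, every bridge slope must vanish. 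Then each restriction $f|_{C_i}$ is itself a PL function on $C_i$ with $\mathrm{div}(f|_{C_i}) = \la{\xi_i}_i - \la{\xi_i'}_i$, and on a single cycle such a principal divisor forces the two points to coincide.

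The main obstacle is uniqueness: existence is a routine chip-firing calculation, but uniqueness requires controlling global linear equivalences on $\Gamma$. A cleaner conceptual route is to invoke the standard structure theorem that the tropical Jacobian of a chain of cycles is canonically isomorphic to $C_1 \times \cdots \times C_g$ via the sum of Abel--Jacobi maps $p \mapsto [p - w_i]$ on each cycle, because the cycle classes $[C_i]$ form an independent basis of $H_1(\Gamma, \ZZ)$. Granting this, the lemma is essentially a restatement of the structure theorem; the bridge-slope analysis sketched above is a direct verification of it in this setting.
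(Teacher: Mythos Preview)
Your argument is correct and matches the paper's approach: both prove existence by reducing to one chip per cycle via the genus-one Abel--Jacobi fact, and prove uniqueness by showing that the bridge slopes of the linking PL function vanish and then restricting to each cycle. The paper's justification for vanishing bridge slopes is slightly crisper than your ``propagation'' language---it simply observes that the degree of $D-D'$ on either side of any bridge is zero (since both divisors place exactly one chip on each cycle), so no flow crosses; your phrase ``divisor vanishing conditions at each $v_i,w_i$'' is misleading because the difference divisor need not vanish at those nodes, though your boundary-condition reasoning reaches the same conclusion.
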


\begin{proof} Assume without loss of generality that $d = g$. So we must show that every degree $g$ divisor is linearly equivalent to a unique sum of $g$ points $\la{\xi_i}_i$, with exactly one point on each cycle.

\textit{Existence of $\xi_i$:} We may assume that the support of $D$ is contained only on the cycles, since any point on a bridge is linearly equivalent to every other point on the the bridge. Next, by adding an integer linear combination of the divisors $w_i - v_{i+1}$ (each of which is linearly equivalent to $0$), we may assume that the portion of $D$ supported on the $i$th cycle has degree $1$. By the Riemann-Roch formula, any degree $1$ divisor on a genus $1$ metric graph is linearly equivalent to an effective degree $1$ divisor. Therefore we can replace the part of $D$ supported on cycle $i$ by a linearly equivalent divisor of the form $\la{\xi_i}_i$ for some $\xi_i$.

\textit{Uniqueness of $\la{\xi_i}_i$:} Suppose that $D = \sum_{i=1}^g \la{\xi_i}_i$ and $D' = \sum_{i=1}^g \la{\xi_i'}_i$ are two linearly equivalent degree $g$ divisors, each consisting of exactly one chip on each cycle. Then $D-D'$ is the principle divisor of a rational function $f$ on $\Gamma$. Since the degree of the part of $D-D'$ to the left of $v_i$ or the right of $w_i$ is $0$, the slope of $f$ is $0$ on all bridges. Therefore, by restricting $f$ to the $i$th cycle, we obtain a rational function on the cycle whose principle divisor is equal to $\la{\xi_i}_i - \la{\xi_i'}_i$. But two points on a cycle are linearly equivalent as divisors if and only if they are the same point: otherwise the rational function whose principle divisor is their difference would have slope $s$ on one interval, and slope $s+1$ on a complementary interval of the cycle, which is impossible since both these slopes would be nonnegative or both nonpositive, with at least one nonzero.
\end{proof}

\begin{rem}
The values $\xi_i$ are easy to compute, given the divisor $D$. Define, for any point $p \in \Gamma$ and index $i$, the number $\widetilde{\xi}_i(p)$ to be $-1$ if $p$ lies to the left of $v_i$, $0$ if $p$ lies to the right of $w_i$, and otherwise to be $\zeta$ such that $p = \la{\zeta}_i$. The function $\widetilde{\xi}_i$ extends to all divisors by linearity. Then the desired values $\xi_i$ are
 $$\xi_i(D) = (i-1) + \widetilde{\xi}_i(D).$$
The value $\xi_i$ is well-defined modulo the length of the the $i$th cycle. Note that $\xi_i(D)$ is not a linear function of the divisor $D$. Instead, it has the feature that $\xi_i(K_\Gamma -D) = -\xi_i(D)$, since $\xi_i(K_{\Gamma}) = 2(i-1)$.
\end{rem}

Lemma \ref{l_stdform} identifies $\Pic^d(\Gamma)$ with the product of the $g$ cycles. Every displacement tableau $t \vdash_{\um} \lambda$ defines a sub-torus in $\Pic^0(\Gamma)$ as follows.

\begin{defn} \label{def:Tt}
Use the notation of Situation \ref{sit_comb}. Let $t$ be an $\um$-displacement tableau on a partition $\lambda$. Denote by $\TT(t)$ the set of divisor classes of the form $$\displaystyle \sum_{i=1}^g \la{\xi_i}_i - g \cdot w_g,$$ where $\xi_1,\cdots,\xi_g$ are real numbers such that $$\xi_{t(x,y)} \equiv x-y \mod{m_{t(x,y)}}$$ for all $(x,y) \in \lambda$.
\end{defn}

Observe that while this definition makes use of the first torsion order $m_1$ of the chain in question, although this number is not included in the torsion profile $\um$. This apparent discrepancy is harmless since the value of $m_1$ is immaterial to the definition of an $\um$-displacement tableau.

\begin{rem}
If $t^\ast$ is the dual tableau of $t$ (obtained by switching the axes), then $\TT(t^\ast)$ is the Serre dual of $\TT(t)$, twisted by $-(2g-2)w_g$ (i.e. the set of classes $[K_\Gamma - D - (2g-2)w_g]$ such that $[D] \in \TT(t)$).
\end{rem}

The definition of $\um$-displacement tableaux ensures that $\TT(t)$ is well-defined and non-empty, since no two boxes of $t$ will impose conflicting conditions on $\xi_i$ (due to the second part of Remark \ref{r_lanot}).

The main result of this paper, Theorem \ref{t_tori} from the introduction, asserts that

 $$W^\lambda(\Gamma,w_g) = \bigcup_{t \vdash_{\um} \lambda} \TT(t).$$

We prove Theorem \ref{t_tori} in Section \ref{ss_pftori}. The key ingredient is Lemma \ref{lem_disp}, which illuminates the link between displacement of partitions and Brill-Noether loci. 

\subsection{Applications and examples of Theorem \ref{t_tori}}
For now, we will assume the result of Theorem \ref{t_tori}, demonstrate how to use it in several examples, and deduce some consequences. 

\begin{cor} \label{c_dp}
The dimension of the largest component of $W^\lambda(\Gamma,w_g)$ is equal to the maximum number of omitted symbols in an $\um$-displacement tableau on $\lambda$. This is equal to $g - |\lambda|$ if and only if it is impossible for an $\um$-displacement tableau on $\lambda$ to have any repeated symbols.
\end{cor}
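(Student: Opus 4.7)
The plan is to derive the corollary directly from Theorem \ref{t_tori} by computing the dimension of each piece $\TT(t)$ in the decomposition $W^\lambda(\Gamma,w_g) = \bigcup_{t \vdash_{\um} \lambda} \TT(t)$. Since there are only finitely many functions $\lambda \to \{1,\ldots,g\}$, the dimension of the largest component of $W^\lambda(\Gamma,w_g)$ equals the maximum of $\dim \TT(t)$ over all $\um$-displacement tableaux $t$ on $\lambda$, so the problem reduces to identifying $\dim \TT(t)$ and translating the resulting maximum into combinatorial language.

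To compute $\dim \TT(t)$, I would return to Definition \ref{def:Tt} and combine it with the parameterization supplied by Lemma \ref{l_stdform}, which identifies $\Pic^0(\Gamma)$ with the product of the $g$ cycles via $(\xi_1,\ldots,\xi_g) \mapsto \bigl[\sum_i \la{\xi_i}_i - g\cdot w_g\bigr]$. In these coordinates, $\TT(t)$ is cut out by the congruences $\xi_{t(x,y)} \equiv x-y \pmod{m_{t(x,y)}}$ for $(x,y) \in \lambda$, and condition (2) of Definition \ref{def:dt} is exactly the statement that these congruences are internally consistent for each fixed symbol. By Remark \ref{r_lanot}(2), as soon as a symbol $i$ appears in $t$ the congruence pins $\la{\xi_i}_i$ down to a single point on the $i$th cycle (contributing $0$ to the dimension), while if $i$ does not appear in $t$ then $\la{\xi_i}_i$ varies freely over the entire $i$th cycle (contributing $1$). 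Hence $\dim \TT(t)$ equals the number of symbols in $\{1,\ldots,g\}$ that are omitted by $t$, which proves the first assertion.

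For the second assertion, let $s(t)$ denote the number of distinct symbols appearing in $t$. Since $t$ has domain $\lambda$, one has $s(t) \le |\lambda|$, with equality precisely when $t$ has no repeated symbols. Therefore the number of omitted symbols $g - s(t)$ satisfies $g - s(t) \ge g - |\lambda|$ for every $t$, and the maximum over $t \vdash_{\um} \lambda$ equals $g - |\lambda|$ if and only if every such tableau achieves this equality, i.e., if and only if no $\um$-displacement tableau on $\lambda$ has any repeated symbols. The substantive content is entirely absorbed into Theorem \ref{t_tori}; the only nontrivial step remaining is the "free vs.\ pinned" dichotomy for each coordinate $\xi_i$, and this is immediate from the torsion-order interpretation of $\la{\cdot}_i$ in Remark \ref{r_lanot}, so I do not anticipate any genuine obstacle here.
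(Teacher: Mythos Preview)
Your argument is correct and follows essentially the same approach as the paper's proof: both observe that each $\TT(t)$ is a torus whose dimension equals the number of omitted symbols, and then deduce the second assertion from the elementary inequality $s(t)\le |\lambda|$. Your write-up simply spells out in more detail the coordinate description (via Lemma~\ref{l_stdform} and Remark~\ref{r_lanot}) that the paper leaves implicit.
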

\begin{proof}
Each $\TT(t)$ is homeomorphic to a torus, whose dimension is equal to the number of symbols $i \in \{1,2,\cdots,g\}$ that do not occur in $t$. This dimension is equal to $g - |\lambda|$ if and only if each symbol in $t$ occurs exactly once.
\end{proof}

\begin{cor} \label{c_gencrit}
The metric graph $\Gamma$ is Brill-Noether general (as a metric graph without a marked point, i.e. in the original sense of \cite{cdpr}) if and only if every $\um$-displacement tableau on a \emph{rectangular} partition has all entries distinct. The marked metric graph $(\Gamma,w_g)$ is Brill-Noether general (as a marked metric graph) if and only if every $\um$-displacement tableau on \emph{any} partition has all entries distinct.
\end{cor}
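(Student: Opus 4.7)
The plan is to reduce the statement directly to Corollary \ref{c_dp} by comparing, partition by partition, the dimensional condition in the definition of Brill-Noether generality with the combinatorial condition on displacement tableaux. Unpacking the definition, $(\Gamma, w_g)$ is Brill-Noether general exactly when, for every partition $\lambda$, either $|\lambda| \leq g$ and $\dim W^\lambda(\Gamma, w_g) = g - |\lambda|$, or $|\lambda| > g$ and $W^\lambda(\Gamma, w_g) = \emptyset$. I would verify in each of these two ranges that the Brill-Noether expectation is equivalent to the statement ``every $\um$-displacement tableau on $\lambda$ has all entries distinct.''

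For the range $|\lambda| \leq g$, I would first observe that any standard Young tableau on $\lambda$ with labels in $\{1, \ldots, g\}$ is automatically an $\um$-displacement tableau, because condition (2) of Definition \ref{def:dt} is vacuous in the absence of repeats. Such a tableau exists since $|\lambda| \leq g$, so by Theorem \ref{t_tori} the locus $W^\lambda(\Gamma, w_g)$ is nonempty. Corollary \ref{c_dp} then immediately gives $\dim W^\lambda(\Gamma, w_g) = g - |\lambda|$ if and only if every $\um$-displacement tableau on $\lambda$ has all distinct entries, which is the desired equivalence. For the range $|\lambda| > g$, pigeonhole forces \emph{any} function $\lambda \to \{1, \ldots, g\}$ to repeat at least one value, so the condition ``every $\um$-displacement tableau has distinct entries'' reduces to ``no $\um$-displacement tableau exists.'' By Theorem \ref{t_tori} this is in turn equivalent to $W^\lambda(\Gamma, w_g) = \emptyset$, matching the Brill-Noether expectation in this range.

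Quantifying the resulting per-partition equivalence over all $\lambda$ yields the marked statement. For the unmarked statement, I would invoke the isomorphism $W^r_d(\Gamma) \cong W^\lambda(\Gamma, w_g)$ from the introduction, where $\lambda$ is the rectangle of height $r+1$ and width $g - d + r$. Every rectangle with positive width and height arises in this way, so the unmarked Brill-Noether condition is precisely the specialization of the marked analysis to rectangular partitions, and the argument above restricts verbatim. The only delicate point is really notational: one must check that the ``vacuously true'' reading of the combinatorial condition in the $|\lambda| > g$ range aligns with the emptiness clause in the definition of Brill-Noether general, which is exactly what the pigeonhole step supplies.
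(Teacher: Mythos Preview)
Your proposal is correct and matches the paper's intended argument: the paper states this corollary without proof, treating it as immediate from Corollary~\ref{c_dp} (together with Theorem~\ref{t_tori} and the identification $W^r_d(\Gamma) \cong W^\lambda(\Gamma,w_g)$ for rectangular $\lambda$), and you have simply spelled out those details carefully, including the pigeonhole observation handling the case $|\lambda| > g$.
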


\begin{eg}
Suppose that $g=4$ and consider the locus $W^1_3(\Gamma)$. This is in bijection with $W^\lambda(\Gamma,w_g)$, where $\lambda = (2,2)$. From the theory of algebraic curves, we expect this to be finite set with two elements, unless $\Gamma$ is special. Indeed, there are two standard Young tableaux on $\lambda$, which correspond to the following two divisor classes of degree $3$.
\begin{align*}
\young(34,12) \hspace{1cm} & \begin{tikzpicture}[thick,scale=0.6, every node/.style={scale=0.6}]
\draw (0,0) circle[radius = 1];
\draw[rounded corners = 1ex] ({0+cos(70)},{sin(70)}) -- (1.000000,1.25) -- (1.500000,1.25);
\draw (3,0) circle[radius = 1];
\draw[rounded corners = 1ex] ({3+cos(110)},{sin(110)}) -- (2.000000,1.25) -- (1.500000,1.25);
\draw[rounded corners = 1ex] ({3+cos(70)},{sin(70)}) -- (4.000000,1.25) -- (4.500000,1.25);
\draw (6,0) circle[radius = 1];
\draw[rounded corners = 1ex] ({6+cos(110)},{sin(110)}) -- (5.000000,1.25) -- (4.500000,1.25);
\draw[rounded corners = 1ex] ({6+cos(70)},{sin(70)}) -- (7.000000,1.25) -- (7.500000,1.25);
\draw (9,0) circle[radius = 1];
\draw[rounded corners = 1ex] ({9+cos(110)},{sin(110)}) -- (8.000000,1.25) -- (7.500000,1.25);
\draw[fill=black] ({0+cos(70)},{sin(70)}) circle[radius=0.1];
\draw[fill=black] ({6+cos(110)},{sin(110)}) circle[radius=0.1];
\draw[fill=black] ({3+cos(30)},{sin(30)}) circle[radius=0.1];
\end{tikzpicture}
\\
&\la{0}_1 + \la{1}_2 + \la{-1}_3 + \la{0}_4 - w_g\\
\young(24,13) \hspace{1cm} &\begin{tikzpicture}[thick,scale=0.6, every node/.style={scale=0.6}]
\draw (0,0) circle[radius = 1];
\draw[rounded corners = 1ex] ({0+cos(70)},{sin(70)}) -- (1.000000,1.25) -- (1.500000,1.25);
\draw (3,0) circle[radius = 1];
\draw[rounded corners = 1ex] ({3+cos(110)},{sin(110)}) -- (2.000000,1.25) -- (1.500000,1.25);
\draw[rounded corners = 1ex] ({3+cos(70)},{sin(70)}) -- (4.000000,1.25) -- (4.500000,1.25);
\draw (6,0) circle[radius = 1];
\draw[rounded corners = 1ex] ({6+cos(110)},{sin(110)}) -- (5.000000,1.25) -- (4.500000,1.25);
\draw[rounded corners = 1ex] ({6+cos(70)},{sin(70)}) -- (7.000000,1.25) -- (7.500000,1.25);
\draw (9,0) circle[radius = 1];
\draw[rounded corners = 1ex] ({9+cos(110)},{sin(110)}) -- (8.000000,1.25) -- (7.500000,1.25);
\draw[fill=black] ({0+cos(70)},{sin(70)}) circle[radius=0.1];
\draw[fill=black] ({3+cos(110)},{sin(110)}) circle[radius=0.1];
\draw[fill=black] ({6+cos(30)},{sin(30)}) circle[radius=0.1];
\end{tikzpicture}\\
& \la{0}_1 + \la{-1}_2 + \la{1}_3 + \la{0}_4 - w_g\\
\end{align*}
If $\um = \underline{0}$ (i.e. $\Gamma$ is general), then these are the only points of $W^1_3(\Gamma)$. However, there are two situations in which additional points exist: if $m_2 = 2$ or if $m_3 = 2$. For example, if $m_2 = 2$, there are two additional one-dimensional tori in $W^1_3(\Gamma)$, coming from the two $(0,2,0,0)$-displacement tableaux below. The second loop is drawn differently to show that $m_2 = 2$, and $\Asterisk$s denote either variables or points which are free to vary (giving the degree of freedom of the torus).

\begin{align*}
\young(24,12) \hspace{1cm} & \begin{tikzpicture}[thick,scale=0.600000, every node/.style={scale=0.600000}]
\draw (0,0) circle[radius = 1];
\draw[rounded corners = 1ex] (0.342020,0.939693) -- (0.444626,1.221600) -- (1.700000,0.000000) -- (2.000000,0.000000);
\draw (3,0) circle[radius = 1];
\draw[rounded corners = 1ex] (4.000000,0.000000) -- (4.300000,0.000000) -- (5.555374,1.221600) -- (5.657980,0.939693);
\draw (6,0) circle[radius = 1];
\draw[rounded corners = 1ex] (6.342020,0.939693) -- (6.444626,1.221600) -- (8.555374,1.221600) -- (8.657980,0.939693);
\draw (9,0) circle[radius = 1];
\draw[fill=black] (0.342020,0.939693) circle[radius=0.1];
\draw[fill=black] (2.000000,-0.000000) circle[radius=0.1];
\draw (6.342020,-0.939693) node {$\Asterisk$};
\end{tikzpicture}\\
& \la{0}_1 + \la{1}_2 + \la{\Asterisk}_3 + \la{0}_4 - w_g\\
\\
\young(23,12) \hspace{1cm} & \begin{tikzpicture}[thick,scale=0.600000, every node/.style={scale=0.600000}]
\draw (0,0) circle[radius = 1];
\draw[rounded corners = 1ex] (0.342020,0.939693) -- (0.444626,1.221600) -- (1.700000,0.000000) -- (2.000000,0.000000);
\draw (3,0) circle[radius = 1];
\draw[rounded corners = 1ex] (4.000000,0.000000) -- (4.300000,0.000000) -- (5.555374,1.221600) -- (5.657980,0.939693);
\draw (6,0) circle[radius = 1];
\draw[rounded corners = 1ex] (6.342020,0.939693) -- (6.444626,1.221600) -- (8.555374,1.221600) -- (8.657980,0.939693);
\draw (9,0) circle[radius = 1];
\draw[fill=black] (6.342020,0.939693) circle[radius=0.1];
\draw[fill=black] (0.342020,0.939693) circle[radius=0.1];
\draw[fill=black] (2.000000,-0.000000) circle[radius=0.1];
\draw[fill=red] (9.342020,0.939693) circle[radius=0.1];
\draw (9.342020,0.939693) node[above] {$-1$};
\draw (8.133975,-0.500000) node {$\Asterisk$};
\end{tikzpicture}\\
& \la{0}_1 + \la{1}_2 + \la{0}_3 + \la{\Asterisk}_4 - w_g\\
\end{align*}
\end{eg}

\begin{eg} (\textit{Hyperelliptic chains}) The locus $W^1_2(\Gamma)$ is nonempty if and only if there exists an $\um$-displacement tableau on $\lambda = (g-1,g-1)$. The only possible displacement tableau on this partition is the following.
$$
t = \begin{array}{|c|c|c|c|c|}\hline
2 & 3 & 4 & \cdots & g\\\hline
1 & 2 & 3 & \cdots & g-1\\\hline
\end{array}
$$
This is an $\um$-displacement tableau if and only if $m_2 = m_3 = \cdots = m_{g-1} = 2$. Therefore a chain has a degree $2$ divisor class of rank $1$ if and only if all of these torsion orders are $2$. Theorem \ref{t_tori} also shows that this divisor class is unique: it must be the divisor class of $\la{0}_1 + \sum_{i=2}^g \la{i-3}_i - (g-2) w_g$.
\end{eg}

We close this subsection by showing how Theorem \ref{t_tori} implies Theorems \ref{thm_genericity1} and \ref{thm_genericity2}, which give necessary and sufficient criteria for a chain of cycles to be Brill-Noether general (in the sense of \cite{cdpr}), and for a marked chain of cycles (with marked point on the rightmost cycle) to be Brill-Noether general (as a marked metric graph).

\begin{proof}[Proof of Theorem \ref{thm_genericity1}]
First, we show that the condition that $m_i = 0$ or $m_i > \min(i,g+1-i)$ is \textit{necessary} for Brill-Noether generality. Suppose that $i$ is an index such that $m_i \neq 0$, $m_i \leq g+1-i$, and $m_i \leq i$. Let $\lambda = (m_i,m_i)$ and define a function $t: \lambda \rightarrow \ZZ$ by
\begin{align*}
t(x,1) &= x-m_i + i,\\
t(x,2) &= x+i-1.
\end{align*}
The given inequalities guarantee that $1 \leq t(x,y) \leq g$, and $t$ has exactly one repeated value: $t(m_i,1) = t(1,2) = i$. Therefore $t$ is an $\um$-displacement tableau on a rectangular partition with a repeated entry. It follows from Corollary \ref{c_gencrit} that $\Gamma$ is not Brill-Noether general.

Now we show that the condition is \textit{sufficient}. Suppose that $\Gamma$ is \textit{not} Brill-Noether general; we will show that there exists some $i$ such that $0 < m_i \leq \min(i,g+1-i)$. By corollary \ref{c_gencrit}, there exists a rectangular partition $\lambda$ and a displacement tableau $t \vdash_{\um} \lambda$ with a repeated symbol. Select such a rectangular partition $\lambda$ and tableau $t$ so that $|\lambda|$ is as small as possible. Suppose that $(x,y),(x',y')$ are two distinct boxes containing the index $i$ in $t$. Assume without loss of generality that $x < x'$ and $y > y'$. Then in fact $(x,y)$ is the upper-left corner of $\lambda$, and $(x',y')$ is the lower-right corner, since otherwise one can restrict $t$ to the rectangle with corners $(x,y),(x,y'),(x',y'),(x',y)$ and regard this as an $\um$-displacement tableau on a smaller rectangle, contradicting the minimality of $|\lambda|$. It follows that $x=1,\ y'=1$, and the \emph{only} repetition of symbols in $t$ is $t(1,y) = t(x',1)$. Let this common label be $i$. Then the boxes $(1,y-1),\ (1,y-2),\ \cdots (1,1), (2,1), \cdots, (x'-1,1)$ ($x'+y-3$ boxes total) all have distinct labels strictly less than $i$, so $x' + y - 3 \leq i-1$. Since $t$ is an $\um$-displacement tableau, $(y-1) - (1-x')$ is divisible by $m_i$; since $(y-1)-(1-x') = x' + y - 2$ is positive, it follows that $m_i > 0$ and $m_i \leq x' + y - 2 \leq i$. Similarly, all the boxes $(2,y),(3,y),\cdots,(x',y),(x',y-1),\cdots,(x',1)$ have distinct labels greater than $i$, and it follows that $x' + y  - 3 \leq g-i$, hence $m_i \leq g+1-i$. Therefore $0 < m_i \leq \min(i,g+1-i)$, as desired.
\end{proof}

\begin{proof}[Proof of Theorem \ref{thm_genericity2}]
First, suppose that there exists an index $i$ such that $0 < m_i \leq i$. Let $\lambda$ be the partition $(m_i,1)$, and define a function $t$ in exactly the same way as in the proof of Theorem \ref{thm_genericity1} (but restricted to the smaller, non-rectangular partition). Then $t$ is an $\um$-displacement tableau with the symbol $i$ repeated, so $(\Gamma,w_g)$ is not Brill-Noether general.

Conversely, suppose that $(\Gamma,w_g)$ is not Brill-Noether general. We will show that there exists some $i  \in \{2,3\cdots,g\}$ such that $m_i \leq i$. There exists some partition $\lambda$ with a $\um$-displacement tableau $t$ that repeats some symbol $i$. Suppose that $i = t(x,y) = t(x',y')$, and assume without loss of generality that $x < x'$ and $y > y'$. Then the boxes $(x,y-1), (x,y-2), \cdots, (x,y'), (x+1,y'), \cdots, (x'-1,y')$ constitute $(x-y) - (x'-y') - 1$ boxes with \emph{distinct} labels in $t$. Hence $i \geq (x-y) - (x'-y') > 0$. Since $m_i$ divides $(x-y) - (x'-y')$, it follows that $m_i \leq i$.
\end{proof}

\subsection{Displacement of Weierstrass partitions} 
We now begin the process of proving Theorem \ref{t_tori}. We begin by linking the notion of displacement of partitions to divisors on metric graphs.

\begin{defn}
Let $D$ be a degree $0$ divisor on a marked metric graph $(\Gamma,w)$. Define
$$ \overline{\lambda}_{\Gamma,w}(D) = \{ (g-d+r,\ r+1):\ d,r \in \ZZ,\ r(D + d\cdot w) \geq r \}.$$
If $D$ is a divisor of \emph{any} degree, define $\overline{\lambda}_{\Gamma,w}(D)$ to be $\overline{\lambda}_{\Gamma,w}(D-\deg D \cdot w)$. The \emph{Weierstrass partition} of $D$ at $w$ is the set $$
\lambda_{\Gamma,w}(D) = \overline{\lambda}_{\Gamma,w}(D) \cap \ZZ_{>0}^2.
$$
\end{defn}

\begin{rem}
The word \emph{Weierstrass} is chosen due to the following analogy with \emph{Weierstrass semigroups} on algebraic curve: if $(C,p)$ is a marked algebraic curve, $D$ is a divisor, and $\lambda_{C,p}(D)$ is defined in the same manner as above, then $\lambda_{C,p}(0)$ (the Weierstrass partition of divisor $0$) encodes the data of the Weierstrass semigroup of the point $p$.
\end{rem}

This set $\lambda_{\Gamma,w}(D)$ is (the boxes in the Young diagram of) a partition since $r(D-w) \leq r(D)$ and $r(D+w) \leq r(D) + 1$. The Riemann-Roch formula implies that $\overline{\lambda}_{\Gamma,w}$ always contains all points $(x,y)$ with $x\leq 0$ or $y\leq 0$, so no information is lost in considering only the (finite) set $\lambda_{\Gamma,w}(D)$. The definition of $W^\lambda(\Gamma,w)$ implies that for any partition $\lambda$,
$$[D] \in W^\lambda(\Gamma,w) \mbox{ if and only if } \lambda \subseteq \lambda_{\Gamma,w}(D).$$

The purpose of this subsection and the next is to explain how to compute the Weierstrass partition of any divisor on a chain of cycles as in Situation \ref{sit_comb}. We begin in a more general situation. Let $A$ be any metric graph, and $u \in A$ a point. Let $C$ be a cycle, and $v,w$ be two distinct points on $C$. Denote by $\Gamma$ the metric graph obtained by attaching $C$ to $A$ by an edge connecting $u$ to $v$ (see Figure \ref{fig_disp}). We will explain how the Weierstrass partition of a divisor $D$ at $w$ on $\Gamma$ is related to the Weierstrass partition of the restriction of $D$ to the metric graph $A$ at the marked point $u$. The relationship is surprisingly simple, and it is the origin of our choice of the word displacement: when the marked graph $(A,u)$ is ``displaced'' to the marked graph $(\Gamma,w)$, the Weierstrass partition is displaced upward (in the sense of the $\disp^+$ operation) in a manner than depends on the torsion order of the divisor $w-v$ (which in turn depends only on the edge lengths of $C$).

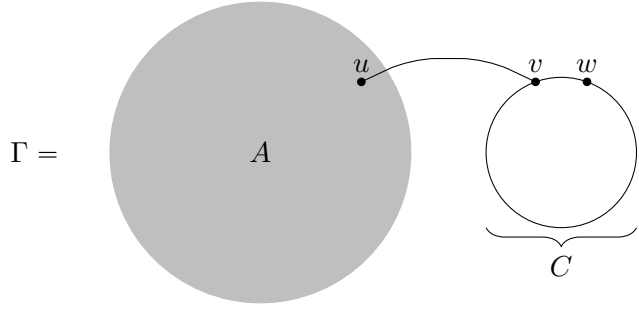
\begin{figure} 
\begin{tikzpicture}
\draw (-3,0) node {$\Gamma = $};

\draw[fill, color = lightgray] (0,0) circle[radius=2cm];
\draw (0,0) node {$A$};
\draw[fill] ({1+cos(70)},{sin(70)}) circle[radius=0.05cm] node[above] {$u$};
\draw[fill] ({4+cos(110)},{sin(110)}) circle[radius=0.05cm] node[above] {$v$};
\draw (4,0) circle[radius=1cm];
\draw[fill] ({4+cos(70)},{sin(70)}) circle[radius=0.05cm] node[above] {$w$};

\draw[rounded corners = 2ex] ({1+cos(70)},{sin(70)}) -- (2,1.25) -- (3,1.25) -- ({4+cos(110)},{sin(110)});

\draw[decorate,decoration={brace,amplitude=0.25cm}] (5,-1) -- (3,-1);
\draw (4,-1.25) node[below] {$C$};
\end{tikzpicture}
\caption{A marked point being displaced across a single cycle.}\label{fig_disp}
\end{figure}

We will denote the points of $C$ as follows, parallel to our notation when describing the chain of cycles. Here $\ell(vw)$ denotes the length of the clockwise edge from $v$ to $w$. $$\la{\xi}= \textrm{the point on $C$ located $\xi\cdot \ell(vw)$ units clockwise from $w$.}$$ Let $m$ denote the nonnegative generator of $\{n \in \ZZ:\ \la{n} = \la{0}\}$, i.e. the torsion order of the divisor $w-v$.

We summarize in the following lemma a few useful facts about metric graphs of this form.

\begin{lemma} \label{l_bridge}
Let $\Gamma$ be the metric graph described above.
\begin{enumerate}
\item Let $D_A,D_A'$ be two divisors on $A$, and let $D_C,D_C'$ be two divisors on $C$. Then $D_A + D_C$ is linearly equivalent to $D_A' + D_C'$ if and only if both
\begin{align*}
D_A + \deg(D_C) \cdot u &\sim D_A' + \deg(D_C')\cdot u \textrm{ as divisors on $A$, and}\\
D_C + \deg(D_A) \cdot v &\sim D_C' + \deg(D_A') \cdot v \textrm{ as divisors on $C$.}
\end{align*}
\item Let $\ell(C)$ denote the length of the cycle $C$. Define a function $\sigma: \Div(C) \rightarrow \textbf{R}/\left( \ell(C)\cdot \ZZ \right)$ by $\la{\xi} \mapsto (\xi+\ell(C) \cdot \ZZ)$, extended by linearity. Two divisors on $C$ are linearly equivalent if and only if they have the same degree and the same image under $\sigma$.
\item Let $D_A,D_C$ be divisors supported on $A$ and $C$ respectively. The linear series $|D_A+D_C|_\Gamma$ is nonempty if and only if there exists an integer $n$ such that both $|D_A+n\cdot u|_A$ and $|D_C - n \cdot v|_C$ are nonempty. Here the subscript in the notation $|\cdot|_X$ indicates the metric graph on which the complete linear series is to be formed.
\end{enumerate}
\end{lemma}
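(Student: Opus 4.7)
The plan is to attack all three parts via the standard apparatus of rational functions on metric graphs, exploiting the fact that on the interior of the bridge, any rational function realizing a given linear equivalence must have constant slope. I would begin with part (1): pick a rational function $f$ on $\Gamma$ with $\Div(f) = (D_A + D_C) - (D_A' + D_C')$ and let $s$ denote the slope of $f$ on the bridge, measured from $u$ toward $v$. Restricting $f$ to $A$ gives a function $f|_A$ whose divisor agrees with $\Div(f)$ away from $u$, but at $u$ itself the slope leaving into the bridge is no longer counted, so $\Div(f|_A) = (D_A - D_A') - s\cdot u$; a symmetric calculation at $v$ gives $\Div(f|_C) = (D_C - D_C') + s\cdot v$. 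A degree count then pins down $s = \deg(D_A) - \deg(D_A') = \deg(D_C') - \deg(D_C)$, and a short manipulation with this value of $s$ produces both identities in part (1). For the converse direction I would glue rational functions on $A$ and $C$ realizing the respective equivalences, interpolated linearly across the bridge with matching slope $s$.

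For part (2), I would interpret the claim as the Abel--Jacobi isomorphism for the genus-one metric graph $C$. The map $\sigma$ descends to linear equivalence classes because any rational function $f$ on $C$ has $\sigma(\Div(f)) = 0$: traversing the cycle once, the slope of $f$ is piecewise constant and jumps by $-\Div(f)(p)$ at each support point $p$, so the requirement that $f$ return to its original value after one full loop is precisely $\sigma(\Div(f)) \equiv 0 \pmod{\ell(C)}$. Injectivity of the induced map on degree-zero classes comes from running this construction in reverse: given a degree-zero divisor with vanishing $\sigma$-image, one builds a piecewise linear function on $C$ by reading off the slopes dictated by the divisor and integrating, with the $\sigma = 0$ condition ensuring that the integral closes up around the cycle. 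Surjectivity is immediate from $\sigma(\la{\xi} - \la{0}) = \xi$.

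Part (3) will follow as a direct corollary of (1). For the forward direction, I would take any effective $E \in |D_A + D_C|_\Gamma$; since the bridge is a tree, chips on its interior can be moved to $u$ or $v$ without changing the linear equivalence class, so one may assume $E = E_A + E_C$ with $E_A$ and $E_C$ supported on $A$ and $C$ respectively. Setting $n = \deg(E_A) - \deg(D_A)$, part (1) then yields $E_A \sim D_A + n\cdot u$ on $A$ and $E_C \sim D_C - n\cdot v$ on $C$, so both auxiliary linear series are nonempty. Conversely, given such an $n$ together with effective representatives $E_A$ and $E_C$, part (1) immediately gives $E_A + E_C \sim D_A + D_C$ on $\Gamma$, and $E_A + E_C$ is manifestly effective. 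The only real obstacle throughout is the bookkeeping around sign conventions for outgoing slopes and the degree-balance across the bridge; beyond that, nothing deeper than standard metric graph divisor theory is required.
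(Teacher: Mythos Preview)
Your proposal is correct and follows essentially the same approach as the paper's proof, just with considerably more detail. The paper disposes of part (1) in a single sentence---``a divisor $D$ is principal on $\Gamma$ if and only if the restriction of $D$ to $A$ differs from a principal divisor by a multiple of $u$ and the restriction of $D$ to $C$ differs from a principal divisor by a multiple of $v$''---which is exactly your restriction/gluing argument with the slope $s$ on the bridge; it calls part (2) ``standard''; and it derives part (3) from (1) by the same move you make, namely pushing chips off the bridge and reading off the integer $n$ from the degrees of the restricted pieces.
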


\begin{proof}
The first part follows from the observation that a divisor $D$ is principal on $\Gamma$ if and only if the restriction of $D$ to $A$ differs from a principal divisor by a multiple of $u$ and the restriction of $D$ to $C$ differs from a principal divisor by a multiple of $v$. The second part is standard. The third part is a corollary of the first; we can take $D_A'$ and $D_C'$ to be the restrictions to $A$ and $C$ of any element of the complete linear series of $D_A + D_C$, after first moving all chips on the edge $uv$ to one end or the other.
\end{proof}

The following lemma is the key to all of our analysis of chains of cycles.

\begin{lemma} \label{lem_disp}
Let $D$ be any divisor of degree $g-1$ on $A$, where $g$ is the genus of $\Gamma$, and $p$ any point on $C$. Then
$$\lambda_{\Gamma,w}(D+p) = 
\begin{cases}
\disp^+(\lambda_{A,u}(D), z+m\ZZ) & \mbox{if $p=\la{z}$ for some integer $z$,}\\
\lambda_{A,u}(D) & \mbox{otherwise.}
\end{cases}$$
\end{lemma}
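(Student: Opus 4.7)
The approach will be to compute $r_\Gamma(D + p + kw)$ directly using Lemma \ref{l_bridge}(3), where $k = y-1-x$, and compare when this rank is at least $y-1$ against the definition of displacement. The crucial simplification is that the genus-$1$ cycle $C$ admits a completely explicit rank theory: a divisor on $C$ has nonempty complete linear series if and only if either its degree is at least $1$, or it has degree $0$ and is linearly equivalent to $0$.

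By the Baker--Norine definition, $r_\Gamma(D + p + (y-1-x)w) \geq y-1$ holds iff for every effective splitting $F = F_A + F_C$ with $\deg F_A = a$ and $\deg F_C = y-1-a$, there is some $n \in \ZZ$ with $|D - F_A + nu|_A \neq \emptyset$ and $|p + (y-1-x)w - F_C - nv|_C \neq \emptyset$. The second nonvanishing, by the genus-1 analysis, holds iff $n \leq a-x$, or $n = a-x+1$ together with $p + (y-1-x)w - F_C - (a-x+1)v \sim 0$ on $C$. A case split on $a$ is now the heart of the argument.

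When $a \leq y-2$ (so $\deg F_C \geq 1$), the linear-equivalence ``loophole'' would pin $F_C$ to a single class in $\Pic^{y-1-a}(C)$; since this Picard group is positive-dimensional, there exist effective $F_C$ outside the class, forcing the condition to reduce to $r_A(D + (a-x)u) \geq a$ for all such $F_A$. This translates to $(x, a+1) \in \overline{\lambda}_A$, and by the partition property of $\lambda_A = \lambda_{A,u}(D)$, the conjunction over $a = 0, \ldots, y-2$ collapses to the single condition $(x, y-1) \in \overline{\lambda}_A$. When $a = y-1$, we have $F_C = 0$, and the condition reads $r_A(D + (y-x)u) \geq y-1$ together with either $r_A(D + (y-1-x)u) \geq y-1$ or the single linear equivalence $p + (y-1-x)w - (y-x)v \sim 0$ on $C$; these translate respectively to $(x-1, y) \in \overline{\lambda}_A$, and either $(x, y) \in \lambda_A$ or the stated linear equivalence.

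Finally, by Lemma \ref{l_bridge}(2), the linear equivalence on $C$ unfolds via $\sigma$ into $\sigma(p) \equiv x-y$ modulo the torsion of $w-v$, which is satisfied precisely when $p = \la{z}$ for some integer $z$ with $x - y \in z + m\ZZ$. Assembling everything, $(x, y) \in \lambda_{\Gamma, w}(D+p)$ iff $(x, y-1), (x-1, y) \in \overline{\lambda}_A$ and either $(x, y) \in \lambda_A$ or the displacement congruence $x - y \in z + m\ZZ$ holds; comparing this with the definition of $\disp^+$ yields $\lambda_{\Gamma, w}(D+p) = \disp^+(\lambda_A, z+m\ZZ)$ when $p = \la{z}$, and $\lambda_{\Gamma, w}(D+p) = \lambda_A$ otherwise. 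The hard part I expect is the case $a \leq y-2$: one must carefully argue that the linear-equivalence rescue is unavailable, which hinges on producing, for each $F_A$ whose rank condition on $A$ fails, an effective $F_C$ outside the forbidden class—a property that depends on the positive dimension of $\Pic^b(C)$ for $b \geq 1$.
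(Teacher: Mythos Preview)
Your argument is correct and follows essentially the same route as the paper's proof: both apply Lemma~\ref{l_bridge}(3) to the divisor $D+p+(y-1-x)w$, split the test divisor according to how many chips lie on $C$, and use the explicit rank theory on the genus-$1$ cycle to determine the maximal admissible $n$. The one cosmetic difference is that the paper first replaces the test divisor $E$ by a linearly equivalent one with \emph{at most one} chip on $C$, so its case analysis has only two branches (zero chips or one chip on $C$), whereas you allow $\deg F_C$ to range over $0,\ldots,y-1$ and then collapse the resulting conditions $(x,a+1)\in\overline{\lambda}_A$ for $a\le y-2$ down to the single condition $(x,y-1)\in\overline{\lambda}_A$ using the partition property. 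Both arrive at the same pair of claims (the paper's (1') and (2')), and your handling of the linear-equivalence ``loophole'' via the positive-dimensionality of $\Pic^b(C)$ for $b\ge 1$ is exactly the content of the paper's observation that one may choose $\la{\delta}\neq\la{\xi+y-x-1}$.
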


\begin{proof}
Throughout this proof, we will use the following vocabulary to make several statements more succinct: we call a divisor $D$ on a marked metric graph $(\Gamma,w)$ a \emph{weak $g^r_d$ for $(\Gamma,w)$} if $r(D + (d-\deg D) w) \geq r$. In other words, a weak $g^r_d$ differs from a $g^r_d$ by a multiple of the marked point\footnote{In the context of algebraic curves, ``a $g^r_d$'' usually means a \emph{linear series}, i.e. a choice of both a degree $d$ divisor class and a $r+1$-dimensional space of sections of the associated line bundle. In our context, there is nothing analogous to a vector space of sections, so we will use the phrase to refer only to the divisor class.}.

Fix positive integers $x,y$, and define integers $r = y-1$ and $d = y-x+g-1$. Then $(x,y) \in \lambda_{\Gamma,w}(D+p)$ if and only if $D+p$ is a weak $g^r_d$ for $(\Gamma,w)$. We will establish the following two claims.
\begin{enumerate}
\item If $p \neq \la{x-y}$, then $D+p$ is a weak $g^r_d$ for $\Gamma$ if and only if $D$ is a weak $g^r_{d-1}$ for $A$.
\item If $p = \la{x-y}$, then $D+p$ is a weak $g^r_d$ for $\Gamma$ if and only if $D$ is both a weak $g^r_d$ and a weak $g^{r-1}_{d-2}$ for $A$.
\end{enumerate}
Translated into the language of Weierstrass partitions, these claims assert that $(x,y) \in \lambda_{\Gamma,w}(D+p)$ if and only if either:
\begin{enumerate}
\item[(1')] $p \neq \la{x-y}$ and $(x,y) \in \lambda_{A,u}(D)$, or
\item[(2')] $p = \la{x-y}$ and both $(x,y-1)$ and $(x-1,y)$ lie in $\overline{\lambda}_{A,u}(D)$.
\end{enumerate}

We now show that these two claims imply the lemma. Consider first the case where $p = \la{z}$ for some integer $z$. Then the set of all integers $z'$ such that $\la{z'} = \la{z}$ is the arithmetic progression $z + m \ZZ$, hence $p = \la{x-y}$ if any only if $x-y \equiv  z \pmod{m}$. Therefore, claims (1) and (2) amount, in this case, to saying that $(x,y) \in \lambda_{\Gamma,w}(D+p)$ if and only if $(x,y) \in \disp^+(\lambda_{A,u}(D),z+m \ZZ)$. Consider next the case where $p$ is not equal to $\la{z}$ for any integer $z$. Then $p \neq \la{x-y}$ for all choices of $(x,y)$, so claim (1) shows that $\lambda_{\Gamma,w}(D+p) = \lambda_{A,u}(D)$ in this case. Therefore the two claims will imply the lemma.

\textit{Proof of claims (1) and (2)}. Let $\xi$ be a real number such that $p = \la{\xi}$. Observe that the divisor $D+p+(d-g)w$ is linearly equivalent to $D + (d-g)v + \la{\xi+d-g}$ by Lemma \ref{l_bridge}, which is equal to $D + (d-g)v + \la{\xi+y-x-1}$. Let $D'$ denote this degree $d$ divisor.

By definition, $D+p$ is a weak $g^r_d$ for $(\Gamma,w)$ if and only if $|D' - E| \neq \emptyset$ for all effective divisors $E$ of degree $r$. Replacing $E$ by a linearly equivalent divisor if necessary, it suffices to consider only effective divisors $E$ with at most one chip on $C$, and no chips on the interior of the edge from $u$ to $v$.

\textit{Case 1: $E$ has one chip on $C$ and $r-1$ chips on $A$.} Let the chip on $C$ be placed at the point $\la{\delta}$, and let $E_A = E-\la{\delta}$ be the part supported on $A$. By Lemma \ref{l_bridge}, $|D'-E| \neq \emptyset$ if and only if there exists an integer $n$ such that $|D - E_A + n\cdot u|_A \neq \emptyset$ and $|(d-g-n)\cdot v + \la{\xi+y-x-1}-\la{\delta}|_C \neq \emptyset$. By Lemma \ref{l_bridge} and the fact that $v = \la{-1}$, the second condition is equivalent to $$| (d-g-n+1)\cdot v - \la{\delta - \xi -y + x}| \neq \emptyset \textrm{ on $C$.}$$

	This holds either if $d-g-n+1 \geq 2$ or if $d-g-n+1 = 1$ and $\la{\delta - \xi - y + x} = v$. Therefore, as long as $\la{\delta}$ is not equal to one specific point (namely, $\la{\xi+y-x-1}$), the maximum value of $n$ satisfying the second condition of Lemma \ref{l_bridge}(3) is $n = d-g-1$. Hence assuming that $\la{\delta}$ is not equal to this point, $|D'-E| \neq \emptyset$ on $\Gamma$ if and only if $|D + (d-g-1)u - E_A| \neq \emptyset$. This holds for \emph{every} choice of degree-$(r-1)$ effective $E_A$ on $A$ if and only if $D$ is a weak $g^{r-1}_{d-2}$ for $A$. 

We conclude that $|D+p-E| \neq \emptyset$ for \emph{all} degree $r$ effective $E$ with exactly one chip on $C$ if and only if $D$ is a weak $g^{r-1}_{d-2}$ on $A$.

\textit{Case 2: $E$ is supported on $A$. }Again, we analyze the maximum value of $n$ such that when $n \cdot v$ is subtracted from the part of $D'-E$ supported on $C$, the result has nonempty linear series. The part of $D'-E$ supported on $C$ consists of $(d-g)v + \la{\xi+y-x-1}$, hence there are two cases: if $\la{\xi+y-x-1} = v$, then the maximum $n$ is $d-g+1$, and otherwise it is $d-g$. The hypothesis of the first case is equivalent to $p = \la{x-y}$. Define $$\epsilon = \begin{cases} 1 & \mbox{if } p = \la{x-y}\\ 0 & \mbox{otherwise.}\end{cases}$$

Lemma \ref{l_bridge} now says that $|D'-E| \neq \emptyset$ on $\Gamma$ if and only if $|D+(d~-~g~+~\epsilon)u - E| \neq \emptyset$ on $A$. This holds for every effective degree $r$ divisor on $A$ if and only if $D$ is a weak $g^r_{d-1+\epsilon}$ for $A$.

\textit{End of the proof:} combining these two cases, we see that $D+p$ is a weak $g^r_d$ for $\Gamma$ if and only if $D$ is both a weak $g^{r-1}_{d-2}$ and a weak $g^r_{d-1+\epsilon}$ for $A$. If $p = \la{x-y}$, i.e. $\epsilon = 1$, this is precisely claim (2). If $p \neq \la{x-y}$, i.e. $\epsilon = 0$, then this is equivalent to saying only that $D$ is a weak $g^r_{d-1}$, since a weak $g^r_{d-1}$ is automatically also a weak $g^{r-1}_{d-2}$; this gives claim (1) and completes the proof of the lemma.
\end{proof}

\subsection{Proof of Theorem \ref{t_tori}} \label{ss_pftori}

We now return Situation \ref{sit_comb}, where $\Gamma$ is a chain of cycles with torsion profile $\um$. By induction on the genus (where the base case may be taken to be a genus $0$ metric graph consisting of a single  vertex), Lemma \ref{lem_disp} immediately gives the a description, in terms of displacement, of the Weierstrass partition of a divisor of the form $D = \sum_{i=1}^g \la{\xi_i}_i$, as follows.

\begin{cor} \label{c_computewp}
Let $D$ be the degree $g$ divisor $\sum_{i=1}^g \la{\xi}_i$ on the marked graph $(\Gamma,w_g)$ of Situation \ref{sit_comb}. For $i=1,2,\cdots,g$, let $S_i = \{z \in \ZZ:\ \la{\xi_i}_i = \la{z}_i \}$. This is either empty, a set with one element (in which case $m_i =0$), or a congruence class modulo $m_i$. Define partitions $\lambda_0, \lambda_1,\cdots,\lambda_g$ as follows.
\begin{align*}
\lambda_0 &= \emptyset\\
\lambda_{i+1} &= \disp^+(\lambda_i,S_{i+1})
\end{align*}
Then the Weierstrass partition of $D$ on $(\Gamma,w_g$) is $\lambda_g$.
\end{cor}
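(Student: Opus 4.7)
The plan is to prove by induction on $i \in \{0, 1, \ldots, g\}$ the stronger statement that $\lambda_i$ equals the Weierstrass partition $\lambda_{\Gamma_i, w_i}(D_i)$, where $\Gamma_i$ is the subchain of $\Gamma$ consisting of the first $i$ cycles (with marked point $w_i$) and $D_i = \sum_{j=1}^i \la{\xi_j}_j$. Taking $i = g$ then recovers the corollary, since $\Gamma_g = \Gamma$ and $D_g = D$. For the base case I take $\Gamma_0$ to be the single vertex $v_1$ with $D_0 = 0$; on a one-point graph the rank of $d \cdot v_1$ is $\max(d,-1)$, so the defining condition $r((y-x-1)\cdot v_1) \geq y-1$ for $\overline{\lambda}_{\Gamma_0, v_1}(0)$ forces $x \leq 0$ or $y \leq 0$. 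Hence $\lambda_{\Gamma_0, v_1}(0) = \emptyset = \lambda_0$.

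For the inductive step, $\Gamma_{i+1}$ is obtained from $\Gamma_i$ by attaching the $(i+1)$-st cycle $C$ along an edge (possibly of length $0$) from $u = w_i$ to $v = v_{i+1}$, with new marked point $w = w_{i+1}$. Since $D_i$ has degree $i$, which is one less than the genus of $\Gamma_{i+1}$, Lemma \ref{lem_disp} applies to $D_{i+1} = D_i + \la{\xi_{i+1}}_{i+1}$ and gives
\[ \lambda_{\Gamma_{i+1}, w_{i+1}}(D_{i+1}) \;=\; \begin{cases} \disp^+\bigl(\lambda_{\Gamma_i, w_i}(D_i),\, z + m_{i+1}\ZZ\bigr), & \la{\xi_{i+1}}_{i+1} = \la{z}_{i+1},\ z \in \ZZ, \\ \lambda_{\Gamma_i, w_i}(D_i), & \text{otherwise.} \end{cases} \]
In the first case $S_{i+1} = z + m_{i+1}\ZZ$, while in the second $S_{i+1} = \emptyset$ and $\disp^+(\lambda, \emptyset) = \lambda$; either way the right-hand side equals $\disp^+\bigl(\lambda_{\Gamma_i, w_i}(D_i), S_{i+1}\bigr)$. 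Combining with the inductive hypothesis yields $\lambda_{\Gamma_{i+1}, w_{i+1}}(D_{i+1}) = \disp^+(\lambda_i, S_{i+1}) = \lambda_{i+1}$, completing the induction.

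There is no real obstacle here, since Lemma \ref{lem_disp} carries out all of the substantive work and the corollary amounts to iterating it once per cycle. The only mildly delicate point is the administrative one of treating the base case, which requires viewing $\Gamma_0$ as a single-vertex graph in Lemma \ref{lem_disp}; if one finds this degenerate, an equivalent alternative is to begin the induction at $i = 1$ and verify directly that $\lambda_{\Gamma_1, w_1}(\la{\xi_1}_1)$ equals $\{(1,1)\}$ when $\la{\xi_1}_1 = w_1$ and is empty otherwise, matching $\disp^+(\emptyset, S_1)$.
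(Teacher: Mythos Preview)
Your proof is correct and follows exactly the route indicated in the paper: induction on the number of cycles, with Lemma~\ref{lem_disp} furnishing the inductive step and the genus-$0$ single-vertex graph serving as the base case. The paper states this in one sentence without spelling out the details; your write-up simply makes the induction explicit, including the verification that $D_i$ has the correct degree for Lemma~\ref{lem_disp} to apply and that the empty-$S_{i+1}$ case matches $\disp^+(\lambda,\emptyset)=\lambda$.
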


\begin{lemma} \label{l_tori1}
If $t \vdash_{\um} \lambda$, then $\TT(t) \subseteq \wlgw$.
\end{lemma}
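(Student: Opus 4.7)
The plan is to deduce the lemma essentially by combining the two corollaries already in hand, \ref{c_tabpart} and \ref{c_computewp}, which together say almost exactly what we need.

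First, I would take any $[D] \in \TT(t)$ and choose a representative of the form $D = \sum_{i=1}^g \la{\xi_i}_i - g \cdot w_g$, where the $\xi_i$ satisfy $\xi_{t(x,y)} \equiv x-y \pmod{m_{t(x,y)}}$ for all $(x,y) \in \lambda$. By the definition of $\lambda_{\Gamma,w_g}$ for divisors of nonzero degree, the Weierstrass partition of $D$ equals that of the degree-$g$ divisor $D' = D + g \cdot w_g = \sum_{i=1}^g \la{\xi_i}_i$. Showing that $[D] \in W^\lambda(\Gamma,w_g)$ is therefore equivalent to showing $\lambda \subseteq \lambda_{\Gamma,w_g}(D')$.

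Next, I would define $S_i = \{z \in \ZZ : \la{\xi_i}_i = \la{z}_i\}$, exactly as in Corollary \ref{c_computewp}. By Remark \ref{r_lanot}(2), $S_i$ is either empty, a singleton (if $m_i = 0$), or a single congruence class modulo $m_i$. The condition defining $\TT(t)$ says that whenever $(x,y) \in \lambda$ has $t(x,y) = i$, we have $\xi_i \equiv x - y \pmod{m_i}$, which is precisely the statement that $x - y \in S_i$. Thus the hypothesis of Corollary \ref{c_tabpart} is satisfied: $t$ is an $\um$-displacement tableau on $\lambda$ and $x - y \in S_{t(x,y)}$ for every $(x,y) \in \lambda$.

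Applying Corollary \ref{c_tabpart} produces a chain of partitions $\lambda'_0 = \emptyset \subseteq \lambda'_1 \subseteq \cdots \subseteq \lambda'_g$ with $\lambda'_{i+1} = \disp^+(\lambda'_i, S_{i+1})$, for which $\lambda \subseteq \lambda'_g$. But Corollary \ref{c_computewp} identifies precisely this $\lambda'_g$ with the Weierstrass partition $\lambda_{\Gamma,w_g}(D')$. Hence $\lambda \subseteq \lambda_{\Gamma,w_g}(D') = \lambda_{\Gamma,w_g}(D)$, which by the equivalence $[D] \in W^\lambda(\Gamma,w_g) \Leftrightarrow \lambda \subseteq \lambda_{\Gamma,w_g}(D)$ noted just after the definition of the Weierstrass partition, gives $[D] \in W^\lambda(\Gamma,w_g)$, as desired.

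I do not expect a substantive obstacle here; the only subtlety is bookkeeping the degree shift by $g \cdot w_g$ so that Corollary \ref{c_computewp} applies directly, and verifying that the congruence condition defining $\TT(t)$ translates cleanly into membership $x-y \in S_{t(x,y)}$. The opposite inclusion (that every class in $W^\lambda(\Gamma,w_g)$ arises from some $\um$-displacement tableau) is the harder half of Theorem \ref{t_tori}, but is not needed for this lemma.
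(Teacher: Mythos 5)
Your proof is correct and follows essentially the same route as the paper's: both reduce to combining Corollary \ref{c_tabpart} with Corollary \ref{c_computewp}, identifying the displacement chain $\lambda'_g$ with the Weierstrass partition of the chosen representative and concluding $\lambda \subseteq \lambda_{\Gamma,w_g}(D)$. The extra bookkeeping you do with the shift by $g\cdot w_g$ and the translation of the congruence condition into $x-y \in S_{t(x,y)}$ is exactly what the paper leaves implicit, so there is nothing to add.
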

\begin{proof}
Suppose that $[D] \in \TT(t)$. Define sets $S_1,S_2, \cdots,S_g$ and partitions $\lambda_0,\lambda_1,\cdots,\lambda_g$ as in corollary \ref{c_computewp}; by that corollary, $\lambda_g$ is the Weierstrass partition of $D$ for $(\Gamma,w_g)$. It suffices to prove that $\lambda \subseteq \lambda_g$. By the definition of $\TT(t)$, the sets $S_i$ satisfy the condition that $x-y \in S_{t(x,y)}$ for all $(x,y) \in \lambda$. Therefore $S_i$ also meet the hypothesis of Corollary \ref{c_tabpart}, from which it follows that $\lambda \subseteq \lambda_g$, as desired.
\end{proof}

\begin{lemma} \label{l_tori2}
If $[D] \in \wlgw$, then there exists an $\um$-displacement tableau $t$ such that $[D] \in \TT(t)$.
\end{lemma}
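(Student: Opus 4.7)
The plan is to read off a displacement tableau $t$ directly from the inductive construction of the Weierstrass partition in Corollary \ref{c_computewp}. By Lemma \ref{l_stdform}, represent $[D]$ as $\sum_{i=1}^g \la{\xi_i}_i - g\cdot w_g$ for unique real numbers $\xi_1,\ldots,\xi_g$, and let $S_i = \{z \in \ZZ : \la{\xi_i}_i = \la{z}_i\}$. Define the partitions $\lambda_0 = \emptyset$ and $\lambda_{i+1} = \disp^+(\lambda_i, S_{i+1})$ as in Corollary \ref{c_computewp}, which identifies $\lambda_g$ with the Weierstrass partition of $D + g\cdot w_g$. The hypothesis $[D] \in W^\lambda(\Gamma,w_g)$ translates to $\lambda \subseteq \lambda_g$.

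Next, for each box $(x,y) \in \lambda$, define $t(x,y)$ to be the least index $i \in \{1,\ldots,g\}$ for which $(x,y) \in \lambda_i$; this is well-defined since $\lambda \subseteq \lambda_g$. By construction, $(x,y)$ is a loose box of $\lambda_{t(x,y)-1}$ with respect to $S_{t(x,y)}$, so $x-y \in S_{t(x,y)}$. Using Remark \ref{r_lanot}, this is precisely the condition $\xi_{t(x,y)} \equiv x-y \pmod{m_{t(x,y)}}$ required for $[D] \in \TT(t)$, handling the $m_i = 0$ case by noting that then $S_i$ is either empty or a single integer equal to $\xi_i$.

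The remaining task is to verify that $t$ is an $\um$-displacement tableau. The congruence condition (2) of Definition \ref{def:dt} is immediate: if $t(x,y) = t(x',y') = i$, then both $x-y$ and $x'-y'$ lie in $S_i$, which by construction is either empty, a single integer, or a single congruence class modulo $m_i$, and in either case $x-y \equiv x'-y' \pmod{m_i}$. For the strict monotonicity condition (1), suppose $(x,y)$ and $(x+1,y)$ both lie in $\lambda$, and set $j = t(x+1,y)$. Then $(x+1,y)$ is loose in $\lambda_{j-1}$, so its left neighbor $(x,y)$ belongs to $\overline{\lambda_{j-1}}$; since $(x,y)$ has positive coordinates, $(x,y) \in \lambda_{j-1}$, giving $t(x,y) \leq j-1 < t(x+1,y)$. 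The analogous argument with bottom neighbors handles columns.

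The only subtlety in the argument is the monotonicity step, where one must leverage the fact that the loose-box condition in the definition of $\disp^+$ requires both the leftward and downward neighbors to already lie in $\overline{\lambda_{j-1}}$; this is precisely what forces strict inequality in rows and columns and turns the inductive construction of $\lambda_g$ into a bona fide tableau. Once this observation is in hand, all remaining checks are bookkeeping: combining this tableau property with the earlier congruence observation establishes $t \vdash_{\um} \lambda$ with $[D] \in \TT(t)$, completing the reverse inclusion needed for Theorem \ref{t_tori}.
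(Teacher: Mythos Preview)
Your proof is correct and follows essentially the same approach as the paper: both represent $[D]$ via Lemma~\ref{l_stdform}, build the chain of partitions $\lambda_i$ by successive displacement (you cite Corollary~\ref{c_computewp}, the paper re-derives it from Lemma~\ref{lem_disp}), and define $t(x,y)$ as the first index at which $(x,y)$ appears. You spell out the verification of condition~(1) of Definition~\ref{def:dt} more explicitly than the paper does, but the argument is the same. One cosmetic point: the $\xi_i$ are not unique real numbers (only the points $\la{\xi_i}_i$ are), though this does not affect anything.
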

\begin{proof}
By Lemma \ref{l_stdform}, we may assume without loss of generality that $D+g\cdot w_g$ consists of exactly one chip on each cycle of $\Gamma$. Let the position of the chip on the $i$th cycle be $\la{\xi_i}_i$.

For each $i \in \{1,2,\cdots,g\}$, denote by $\Gamma_i$ the first $i$ cycles of the chain $\Gamma$ (with the bridges between them), and denote by $D_i$ the restriction of $D+g\cdot w_g$ to $\Gamma_i$. Let $\lambda_i$ be the Weierstrass partition of $D_i$ for the marked metric graph $(\Gamma_i,w_i)$. Then by Lemma \ref{lem_disp}, these partitions are related by
$$\lambda_{i+1} = \disp^+(\lambda_i,S_{i+1}),$$
where $S_i = \{z \in \ZZ:\ \la{\xi_i}_i = \la{z}_i \}$. In particular, all of the boxes $(x,y)$ in $\lambda_{i+1} \backslash \lambda_i$ have values of $x-y$ that are congruent modulo $m_i$.

Define a function $t:\ \lambda \rightarrow \{1,2,\cdots,g\}$ by 
$$t(x,y) = \min \{i:\ (x,y) \in \lambda_i \}.$$
The previous paragraph shows that $t$ is in fact an $\um$-displacement tableau. Each $(x,y) \in \lambda$ is a loose box of $\lambda_{t(x,y)-1}$ with respect to the set $S_{t(x,y)}$, so in particular $x-y \in S_{t(x,y)}$, i.e. $\la{\xi_{t(x,y)}}_{t(x,y)} = \la{x-y}_{t(x,y)}$. Therefore $[D] \in \TT(t)$.
\end{proof}

Theorem \ref{t_tori} follows directly from Lemmas \ref{l_tori1} and \ref{l_tori2}.

\section{Algebraic geometry of $\wl(C,p)$} \label{sec_ag}

Throughout this section, let $C$ denote a smooth projective curve over a field $K$, and $p \in C(K)$ a $K$-point. We will define the structure of $W^\lambda(C,p)$ as a subscheme of $\Pic^0(C)$, and compute its expected dimension and class via intersection theory.

We first review the scheme structure of $W^r_d(C)$, following \cite[IV \S 3]{acgh}. If an effective divisor $E$ of degree at least $2g-d-1$ is fixed, then the vector spaces $H^0( L(E) )$ (where $L$ varies over all degree $d$ line bundles) all have the same dimension $\deg E + d-g + 1$, and form the fibers of a vector bundle $\mathcal{M}$ over $\Pic^d(C)$. A second vector bundle $\mathcal{P}$ may be formed, whose fiber over $[L] \in \Pic^d(C)$ is $H^0( L(E) / L)$. There is an obvious map $\mathcal{M} \rightarrow \mathcal{P}$ of vector bundles. Observe that for any particular line bundle $L$ of degree $d$, $h^0(L) \geq r+1$ if and only if $\dim \ker \left( H^0(L(E)) \rightarrow H^0(L(E)/L)\right) \geq (r+1)$. Therefore $W^r_d(C)$ is defined to be the subscheme of $\Pic^d(C)$ whose defining equations are, locally, the $N \times N$ minors of a trivialization of the bundle map $\mathcal{M} \rightarrow \mathcal{P}$, where $N = (\deg E + d - g - r+1)$. The details of the construction of the vector bundles $\mathcal{M},\mathcal{P}$, as well as the fact that this scheme does not depend on any of the choices made, can be found in \cite{acgh}.

Each scheme $W^r_d(C)$ may be regarded as a subscheme of $\Pic^0(C)$, via the isomorphism $L \mapsto L(-d \cdot p)$.

\begin{defn} \label{d_wlfirst}
The scheme $W^\lambda(C,p)$ is the intersection of all the schemes $W^r_d(C)$ such that $(g-d+r,r+1) \in \lambda$, where these are regarded as subschemes of $\Pic^0(C)$.
\end{defn}

Definition \ref{d_wlfirst} is the most natural definition, but it is difficult to use since the various schemes being intersected are not transverse. We therefore give an alternative characterization, to which we can apply intersection theory.

First, since we wish to describe a locus in $\Pic^0(C)$ rather than $\Pic^d(C)$, we may replace $L$ by $L(d \cdot p)$ in the discussion above.

Next, suppose that in the construction of the schemes $W^r_d(C)$ we choose the divisor $E$ to be $(2g-1-d) \cdot p$. Then the same vector bundle $\mathcal{M}$ can be used in the construction of every locus $W^r_d(C)$; its fiber over $[L]$ will be $H^0(L((2g-1)p))$. The bundle $\mathcal{P}$ used in the definition of $W^r_d(C)$ has fibers naturally identified with $H^0( L((2g-1) \cdot p) / L(d \cdot p))$ (note that we now write $L(d \cdot p)$ since $L$ is a degree $0$ line bundle, and $L(d \cdot p)$ is the line bundle which is required to have $r+1$ sections). Write this bundle as $\mathcal{P}_d$. These vector bundles, for various choices of $d$, fit into a flag of quotients.

$$
\mathcal{P} = \mathcal{P}_{-1} \twoheadrightarrow \mathcal{P}_{0}
\twoheadrightarrow \mathcal{P}_{1}
\twoheadrightarrow \cdots
\twoheadrightarrow \mathcal{P}_{2g-1} = 0
$$

In this notation, we can alternatively characterize $W^\lambda(C,p)$ as the following degeneracy locus.

\begin{align*}
W^\lambda(C,p) =& \left\{[L] \in \Pic^0(C):\ \textrm{rank}( \mathcal{M}_{[L]} \rightarrow \left(\mathcal{P}_d)_{[L]} \right) \leq g-r-1\right. \\
& \left.\mbox{ whenever } (r+1,g-d+r) \in \lambda \right\}
\end{align*}

Degeneracy loci of this form are analyzed in \cite{fulton}. The expected codimension of this degeneracy locus is equal to $|\lambda|$, and the class of the locus, in case it has the correct dimension, can be computed from the Chern classes of $\mathcal{M}$ and $\mathcal{P}_d$. By \cite[VII \S 4]{acgh} (where the same bundles are used to analyze $W^r_d(C)$), $\mathcal{P}_i$ has trivial Chern classes, and $\mathcal{M}$ has Chern classes given by 
$$
c_i(-\mathcal{M}) = \Theta^i / i!.
$$

By \cite[Theorem 10.1]{fulton}, this degeneracy locus supports a (Chow or singular cohomology) class given by the following determinant, with this class being equal to the class of the degeneracy locus in case the dimensions match. Here $\lambda^*_1$ denotes the largest element of the dual partition, i.e. the number of nonzero elements of $\lambda$.

\begin{align*}
\Omega^\lambda(C) &= \det \left( c_{\lambda_i-i+j} ( - \mathcal{M} ) \right)_{1 \leq i,j \leq \lambda^*_1}\\
&= \det \left( \Theta^{\lambda_i - i + j} / (\lambda_i-i+j)! \right)_{1 \leq i,j \leq \lambda^*_1}\\
&= \Theta^{|\lambda|} \cdot \det \left( 1 / (\lambda_i - i + j)! \right)_{1 \leq i,j \leq \lambda^*_1}
\end{align*}

In the last line, the number $1 / n!$ should be interpreted to be $0$ when $n < 0$. By Aitken's determinantal formula \cite{aitken}, this determinant is equal to the number of standard Young tableaux on $\lambda$ divided by $|\lambda|!$; by the hook length formula \cite{frame} we can express this as the product of the reciprocals of the hook lengths of $\lambda$.

We therefore deduce the following fact.

\begin{prop} \label{prop_lowerbound}
If $g \geq |\lambda|$, then the Brill-Noether locus $W^\lambda(C,p)$ is nonempty of dimension at least $g - |\lambda|$. If the dimension of $W^\lambda(C,p)$ is exactly $g - |\lambda|$, then its (Chow or singular cohomology) class is equal to
$$
\left. \Theta^{|\lambda|} \middle/ \prod_{(x,y) \in \lambda} \hook(x,y) \right. .
$$
\end{prop}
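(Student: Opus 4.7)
The proposition is a direct consequence of applying the Thom-Porteous/Kempf-Laksov machinery to the degeneracy locus description of $W^\lambda(C,p)$ already set up in the excerpt, followed by simplifying the resulting determinant combinatorially. The plan has three steps.

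First, I would invoke Theorem 10.1 of Fulton's \textit{Intersection Theory} applied to the degeneracy locus description of $W^\lambda(C,p)$ given in the excerpt: it is the locus in $\Pic^0(C)$ where, for every $(r+1,\, g-d+r) \in \lambda$, the map $\mathcal{M} \to \mathcal{P}_d$ has rank at most $g-r-1$. Only the outer corners of $\lambda$ contribute essential rank conditions, and this flagged configuration matches the Kempf-Laksov setup precisely. Fulton's theorem then yields two conclusions: every irreducible component has codimension at most the expected value $|\lambda|$, and when codimension $|\lambda|$ is attained, the class equals the Schur-type determinant
$$
\Omega^\lambda(C) = \det\bigl(c_{\lambda_i - i + j}(-\mathcal{M})\bigr)_{1 \leq i,j \leq \lambda^*_1}.
$$

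Second, I would substitute the Chern classes of $-\mathcal{M}$ using the formula $c_i(-\mathcal{M}) = \Theta^i/i!$ from \cite[VII \S 4]{acgh}. The $(i,j)$ entry becomes $\Theta^{\lambda_i - i + j}/(\lambda_i - i + j)!$ with the convention that $1/n! = 0$ when $n < 0$, and the common factor $\Theta^{|\lambda|}$ pulls out of the determinant because along every permutation $\sigma$ contributing to the expansion, the total power of $\Theta$ is $\sum_i(\lambda_i - i + \sigma(i)) = |\lambda|$. What remains is the numerical Jacobi-Trudi-type determinant $\det\bigl(1/(\lambda_i - i + j)!\bigr)$.

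Third, I would invoke Aitken's determinantal identity to express this numerical determinant as $f^\lambda/|\lambda|!$, where $f^\lambda$ is the number of standard Young tableaux on $\lambda$, and then invoke the Frame-Robinson-Thrall hook length formula $f^\lambda = |\lambda|!/\prod_{(x,y)\in\lambda} \hook(x,y)$. Combined with Step 2, this gives $\Omega^\lambda(C) = \Theta^{|\lambda|}/\prod_{(x,y)\in\lambda} \hook(x,y)$. Since $\Theta^g$ is a nonzero multiple of the class of a point on $\Pic^0(C)$, the power $\Theta^{|\lambda|}$ is nonzero whenever $|\lambda| \leq g$; hence $\Omega^\lambda(C) \neq 0$, which forces $W^\lambda(C,p)$ to be nonempty, gives the codimension bound, and yields the class formula in the equidimensional case. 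The main subtlety is verifying that the rank conditions indexed by the boxes of $\lambda$, viewed across the flag $\mathcal{P} = \mathcal{P}_{-1} \twoheadrightarrow \cdots \twoheadrightarrow \mathcal{P}_{2g-1} = 0$, correspond precisely to the shape input to Fulton's formula; once the outer corners of $\lambda$ are matched to the essential rank drops and the interior boxes are seen to be automatic, the remainder is bookkeeping.
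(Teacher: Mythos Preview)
Your proposal is correct and follows essentially the same approach as the paper: the paper likewise invokes \cite[Theorem 10.1]{fulton} on the flagged degeneracy locus, substitutes $c_i(-\mathcal{M}) = \Theta^i/i!$ from \cite[VII \S4]{acgh}, factors out $\Theta^{|\lambda|}$, and then applies Aitken's determinantal formula together with the hook length formula. Your added remark that nonemptiness follows from $\Theta^{|\lambda|} \neq 0$ for $|\lambda| \leq g$ makes explicit what the paper leaves implicit.
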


\section{Tropicalization} \label{sec_trop}

This section is concerned with the marked curve $(C,p)$ from Situation \ref{sit_main}. We will prove Theorem \ref{thm_introalg}, which is a corollary of a pair of ``semicontinuity'' results.

The retraction from the Berkovich analytic space $C^{\textrm{an}}$ to its skeleton extends by linearity to divisors and induces a map

$$
\Trop:\ \Pic(C) \rightarrow \Pic(\Gamma).
$$

When $K$ is the field of fractions of a discrete valuation ring, this map is identical to the tropicalization map of \cite{baker08}. This map has two important properties, which can be interpreted as two forms of semicontinuity for $\Trop$.

\begin{prop}\ \label{prop_semi}
\begin{enumerate}
\item The image of $W^\lambda(C,p)$ under $\Trop$ lies in the locus $W^\lambda(\Gamma,w_g)$.
\item $\dim \wlgw \geq \dim W^\lambda(C,p)$.
\end{enumerate}
\end{prop}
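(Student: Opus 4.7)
The plan is to address the two statements separately: part (1) by a pointwise application of Baker's specialization lemma, and part (2) by combining part (1) with a tropical dimension-preservation theorem for subvarieties of the Jacobian.

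For part (1), I would fix $[L] \in W^\lambda(C,p)$ and verify the defining conditions of $W^\lambda(\Gamma, w_g)$ one by one. Definition \ref{def:wl_alg} says that for each $(g-d'+r',\ r'+1) \in \lambda$, we have $h^0(L(d' \cdot p)) \geq r'+1$, i.e.\ the algebraic rank satisfies $r_C(L(d' \cdot p)) \geq r'$. Since $p$ specializes to $w_g$ and the tropicalization map is $\ZZ$-linear on divisors, $\Trop(L(d' \cdot p)) = \Trop(L) + d' \cdot w_g$. Applying Baker's specialization lemma, which states that $r_\Gamma(\Trop(D)) \geq r_C(D)$ for any divisor $D$ on $C$, yields $r_\Gamma(\Trop(L) + d' \cdot w_g) \geq r'$. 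This is exactly the condition for $\Trop(L)$ to lie in $W^\lambda(\Gamma, w_g)$.

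For part (2), let $X$ be an irreducible component of $W^\lambda(C, p)$ of maximal dimension. By part (1), $\Trop(X) \subseteq W^\lambda(\Gamma, w_g)$. Because $C$ has totally split reduction, the Jacobian $\Pic^0(C)$ has totally degenerate reduction, and its Berkovich skeleton is canonically $\Pic^0(\Gamma)$, a real torus of dimension $g$. For a closed irreducible subvariety of such an abelian variety, tropicalization preserves dimension: by Gubler's analogue of the Bieri--Groves theorem for abelian varieties with totally degenerate reduction, $\dim_{\RR} \Trop(X) = \dim_K X$. Chaining these observations,
$$
\dim W^\lambda(\Gamma, w_g) \;\geq\; \dim \Trop(X) \;=\; \dim X \;=\; \dim W^\lambda(C, p).
$$

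The main obstacle is the dimension-preservation step, which requires carefully invoking Gubler's theorem in the generality of Situation \ref{sit_main}. One must verify that the hypotheses (complete valued field with infinite residue field, totally split reduction, $w_g$ rational over the value group) suffice to identify the skeleton of $\Pic^0(C)^{\mathrm{an}}$ with $\Pic^0(\Gamma)$ and to ensure the value group is large enough for the tropicalization of $X$ to realize its full real dimension. Baker's specialization lemma was originally proved over discretely valued fields, so for part (1) one should also cite or supply its standard extension to the general valuation setting. Modulo these references, both inequalities are formal.
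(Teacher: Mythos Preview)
Your proposal is correct and follows essentially the same route as the paper: part (1) via linearity of $\Trop$ and Baker's specialization inequality (with the remark that the discrete-valuation proof extends), and part (2) via the identification of the skeleton of $\Pic^0(C)^{\mathrm{an}}$ with $\Pic^0(\Gamma)$ together with Gubler's dimension-preservation theorem. The paper cites Baker--Rabinoff for the skeleton identification and Gubler's Theorem 6.9 for the dimension statement, which are exactly the references you flagged as needing to be pinned down.
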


\begin{proof}
The definition of $\Trop$ implies that for any divisor $D$ on $C$ and any integer $n$, $\Trop(D + n \cdot p)  = \Trop(D) + n w_g$. Therefore it suffices to show that $\Trop(W^r_d(C)) \subseteq W^r_d(\Gamma)$ for all integers $r,d$. This is precisely the specialization inequality of \cite{baker08}; although that paper works in the discrete valuation case, the proof applies without modification to the general case. This establishes part (1).

The tropical Jacobian $\Pic^0(\Gamma)$ is isomorphic to the skeleton $\Sigma(\Pic^0(C)^{\textrm{an}})$ \cite{br}, and this isomorphism is compatible with the tropicalization map. Therefore Gubler's results on tropicalization of subvarieties of abelian varieties apply: by \cite[Theorem 6.9]{gubler}, a variety of pure dimension $d$ in $\Pic^0(C)$ has image, under the retraction to the skeleton $\Sigma(\Pic^0(C))$, of pure dimension $d$. Combining this with part (1), it follows that the image $\Trop(W^\lambda(C,p)) \subseteq \Gamma$ contains a tropical variety of dimension equal to the dimension of $W^\lambda(C,p)$, and part (2) follows.
\end{proof}

\begin{cor}[Theorem \ref{thm_introalg}]
In Situation \ref{sit_main}, if for each $i$ either $m_i = 0$ or $m_i > i$, then for every partition $\lambda$, $W^\lambda(C,p)$ has the expected dimension.
\end{cor}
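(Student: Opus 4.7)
The proof is essentially a direct assembly of the three results made available immediately before: Theorem \ref{thm_genericity2}, Proposition \ref{prop_semi}, and Proposition \ref{prop_lowerbound}. My plan is to use the tropical Brill--Noether general criterion to pin down the dimension of $W^\lambda(\Gamma, w_g)$ exactly, then squeeze $\dim W^\lambda(C,p)$ between a tropical upper bound and an intersection-theoretic lower bound.

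Concretely, the first step is to invoke Theorem \ref{thm_genericity2}: the hypothesis ``$m_i=0$ or $m_i>i$ for each $i$'' is precisely the Brill--Noether generality condition for the marked chain $(\Gamma, w_g)$. Hence, for every partition $\lambda$, the locus $W^\lambda(\Gamma, w_g)$ has dimension $g - |\lambda|$ when $g \geq |\lambda|$, and is empty when $|\lambda| > g$.

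The second step splits on whether $|\lambda| \leq g$. If $|\lambda| \leq g$, Proposition \ref{prop_lowerbound} (the intersection-theoretic bound on the algebraic side) gives $\dim W^\lambda(C,p) \geq g - |\lambda|$, while Proposition \ref{prop_semi}(2) gives
\[
\dim W^\lambda(C,p) \;\leq\; \dim W^\lambda(\Gamma, w_g) \;=\; g - |\lambda|,
\]
and the two inequalities pin the dimension to the expected value $g-|\lambda|$. If instead $|\lambda| > g$, then $W^\lambda(\Gamma, w_g) = \emptyset$. By Proposition \ref{prop_semi}(1), the image $\Trop(W^\lambda(C,p))$ is contained in the empty set, so $W^\lambda(C,p)$ itself must be empty (since $\Trop$ is defined on every point of $\Pic^0(C)$). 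This again matches the expected dimension convention.

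There is essentially no obstacle: all the substance lies in the previously established results. The only small care needed is to observe that in the $|\lambda| > g$ case one really does get emptiness on the algebraic side, which follows immediately from Proposition \ref{prop_semi}(1) because tropicalization is everywhere defined, so a nonempty $W^\lambda(C,p)$ would force a nonempty image inside the empty set $W^\lambda(\Gamma, w_g)$. With these two cases combined, $W^\lambda(C,p)$ has the expected dimension for every $\lambda$, which is exactly the assertion of the corollary.
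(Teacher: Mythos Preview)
Your proof is correct and follows the same approach as the paper, which simply cites Theorem \ref{thm_genericity2} and Proposition \ref{prop_semi}. You have merely made explicit the lower bound from Proposition \ref{prop_lowerbound} and the case split on $|\lambda| \leq g$ versus $|\lambda| > g$, both of which the paper leaves implicit in its one-line proof.
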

\begin{proof}
Theorem \ref{thm_genericity2} and Proposition \ref{prop_semi}.
\end{proof}

\begin{cor} \label{cor_gbn}
Working over any algebraically closed field, if $(C,p)$ is a general marked curve, then for every partition $\lambda$, $W^\lambda(C,p)$ has the expected dimension, and Chow class given by Formula \eqref{eq_class}.
\end{cor}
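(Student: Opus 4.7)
The plan is to combine Theorem \ref{thm_introalg} with a standard semicontinuity argument in the moduli stack $\mathcal{M}_{g,1}$, together with the lower bound from Proposition \ref{prop_lowerbound}.

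First, I would verify that the Brill-Noether-general marked chain of cycles required by Theorem \ref{thm_introalg} can be realized as the skeleton of a smooth proper curve over a complete valued field of any prescribed characteristic. Fix an algebraically closed field $k$ and choose any chain of cycles $\Gamma$ of genus $g$ whose torsion profile satisfies $m_i = 0$ or $m_i > i$ for all $i$; for instance, take the edge-length ratios $\ell(v_iw_i)/\ell_i$ to be irrational so that every $m_i = 0$. Let $K$ be a complete non-archimedean valued field of residue characteristic equal to $\mathrm{char}(k)$, with residue field containing $k$ and with value group containing all the edge lengths of $\Gamma$ (e.g., the completed algebraic closure of $k((t))$ after a suitable extension of the value group). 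Standard tropical constructions, via Mumford curves or via generic fibers of regular strictly semistable models, then produce a smooth projective curve $C/K$ with totally split reduction and minimal skeleton isometric to $\Gamma$, together with a $K$-rational point $p$ specializing to $w_g$. This places us in Situation \ref{sit_main}.

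Next, I would apply Theorem \ref{thm_introalg} to this $(C,p)$ to obtain $\dim W^\lambda(C,p) \leq g - |\lambda|$ for every $\lambda$ with $|\lambda| \leq g$, and $W^\lambda(C,p) = \emptyset$ otherwise. Combined with the lower bound $\dim W^\lambda(C,p) \geq g - |\lambda|$ of Proposition \ref{prop_lowerbound}, this forces equality. To transfer this from one special $K$-point to a generic marked curve over $k$, I would use that the $W^\lambda$ fit into a family of degeneracy loci over $\mathcal{M}_{g,1}$ (the relative version of the construction in Section \ref{sec_ag}). Upper semicontinuity of fiber dimension then gives an open substack $U^\lambda \subseteq \mathcal{M}_{g,1}$ on which $\dim W^\lambda \leq g - |\lambda|$, and this substack is nonempty since it contains the point corresponding to $(C,p)$. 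Because $\mathcal{M}_{g,1}$ is geometrically irreducible in every characteristic, each $U^\lambda$ is dense in $\mathcal{M}_{g,1,k}$, and the intersection over the finitely many partitions with $|\lambda| \leq g$ is still dense. On this dense open locus the dimension is exactly $g - |\lambda|$, and Proposition \ref{prop_lowerbound} immediately yields the class formula \eqref{eq_class}.

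The main obstacle I expect is the realization step: one must know that a Brill-Noether-general marked chain of cycles is the minimal skeleton of some smooth marked curve over a non-archimedean field of every characteristic. Over characteristic zero (say with $K$ a Puiseux series field) this is classical, and in positive characteristic the required semistable lifting is by now standard in the tropical Brill-Noether literature; the remaining semicontinuity and irreducibility inputs are well-established and require no new ideas.
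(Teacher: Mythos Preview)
Your proposal is correct and follows essentially the same approach as the paper's proof. The paper is terser---it invokes deformation theory (citing the appendix of Baker's specialization paper) for the realization step, and phrases the spreading argument via the coarse moduli space $M_{g,1}$ over $\mathrm{Spec}\,\ZZ$ with the Brill--Noether--general locus open and surjecting onto $\mathrm{Spec}\,\ZZ$, rather than working characteristic-by-characteristic as you do---but the ingredients (Theorem~\ref{thm_introalg}, realization of the skeleton, openness of the locus in moduli, and Proposition~\ref{prop_lowerbound} for the class) are identical.
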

\begin{proof}
Theorem \ref{thm_introalg} and deformation theory (cf. \cite[Appendix]{baker08}) show the existence of Brill-Noether general marked curves over any complete valued field with infinite residue field. The existence of Brill-Noether general marked curves over an arbitrary algebraically closed field follows from the fact that the coarse moduli space $M_{g,1}$ of marked smooth curves is a scheme of finite type over $\textrm{Spec} \ZZ$, and the locus of Brill-Noether general points is Zariski open and surjects onto $\textrm{Spec} \ZZ$.

Proposition \ref{prop_lowerbound} shows that the Chow class is given by formula \eqref{eq_class}.
\end{proof}

The generalized Brill-Noether theorem (Theorem \ref{thm_gbn}) follows from Corollary \ref{cor_gbn}.

\section{Further questions for arbitrary metric graphs} \label{sec_qu}

The detailed description, for marked chains of cycles $(\Gamma,w_g)$, of the Brill-Noether loci $W^\lambda(\Gamma,w_g)$, makes it possible to see an intriguing analogy with the theory of algebraic curves; we believe that extending this analogy to arbitrary metric graphs might yield further insight. In this section we formulate this analogy and the questions arising from it.

For a marked algebraic curve $(C,p)$, classical intersection theory, as discussed in Section \ref{sec_ag}, guarantees not only that $W^{\lambda}(C,p)$ is nonempty (when $g-|\lambda| \geq 0$), but that all of its irreducible components are at least $(g-|\lambda|)$-dimensional and that it supports a particular intersection class, namely $\Theta_C^{|\lambda|} / \prod_{(x,y) \in \lambda} \hook(x,y)$. Furthermore, this is precisely the class of $W^\lambda(C,p)$ when its dimension is exactly $g - |\lambda|$.

Even though there is currently no equivalent in tropical intersection theory of the machinery available to study degeneracy loci in algebraic geometry (as in \cite{fulton}), the same facts outlined in the previous paragraph hold in the context of chains of cycles, when interpreted in a suitable way. Indeed, observe that if $(\Gamma,w_g)$ is a marked chain of cycles as in Situation \ref{sit_comb}, and $\lambda$ is a partition, then regardless of the torsion profile there is always a locus given by those tableaux that have no repeating symbols, namely
$$
\bigcup_{t \vdash_{\underline{0}} \lambda} \TT(t) \subseteq W^\lambda(\Gamma,w_g).
$$
The parameterization provided by Defintion \ref{def:Tt} shows that this locus varies continuously as the edge lengths of $\Gamma$ vary. In this section we will refer to this subset as the \emph{stable locus}. It consists of a union of $(g-|\lambda|)$-dimensional tori. Furthermore, observe that any point of $W^\lambda(\Gamma,w_g)$ that is not in the stable locus necessarily sits in a torus $\TT(t)$ coming from a tableau $t$ with a repeated symbol, i.e. a torus of dimension strictly greater than $g-|\lambda|$. Thus the locus $W^\lambda(\Gamma,w_g)$ is equal to the stable locus if and only if it is exactly $(g-|\lambda|)$-dimensional.

To complete the analogy with algebraic curves, we point out that, in a suitable sense, the stable locus has intersection class $\Theta_\Gamma^{|\lambda|} /  \prod_{(x,y) \in \lambda} \hook(x,y)$. We must define what we mean by these terms. 

For any metric graph $\Gamma$ of genus $g$, there is a tropical subvariety $\Theta_\Gamma \subset \Pic^{g-1}$, called the tropical theta divisor, which can be identified with $W^{0}_{g-1}(\Gamma)$; see \cite{mz}. As usual, we suppose that we have chosen a marked point $w \in \Gamma$, and regard $\Theta_\Gamma$ as a subset of $\Pic^0(\Gamma)$. When $\Gamma$ is a chain of cycles, Theorem \ref{t_tori} shows that $\Theta_\Gamma$ is a union of $(g-1)$-dimensional tori (the Young diagram associated to $r=0,\ d=g-1$ is a single box), each obtained by fixing one coordinate $\xi_i$ to be equal to $0$ (in the notation of Lemma \ref{l_stdform}). If $d$ is a positive integer, and we choose $d$ general translates of $\Theta_\Gamma$, we can easily describe the intersection of these translates: it is a union of $g (g-1) \cdots (g-d+1)$ sub-tori, each giving by fixing $d$ coordinates $\xi_i$. More precisely, for each choice of $d$ coordinates, the intersection includes $d!$ tori given by fixing these $d$ coordinates.

Now, observe that the stable locus of $W^\lambda(\Gamma,w_g)$ has a similar description: for each standard Young tableaux on $\lambda$ and choice of $|\lambda|$ symbols from the set $\{1,2,\cdots, g\}$, there is one torus in the stable locus, given by fixing the $|\lambda|$ coordinates chosen. By the hook-length formula \cite{frame}, the number of tori in the stable locus fixing a given set of $|\lambda|$ coordinates is
$$
|\lambda| ! / \prod_{(x,y) \in \lambda} \hook(x,y).
$$
Provided that any subtorus of $\Pic^0(\Gamma)$ is regarded as ``equivalent'' to any translate of itself, it follows that the stable locus of $W^\lambda(\Gamma,w_g)$ may be regarded as being equivalent to $1 / \prod_{(x,y) \in \lambda} \hook(x,y)$ times the locus given by intersecting $|\lambda|$ general translates of $\Theta_\Gamma$. This precisely mirrors the formula for the expected class of $W^\lambda(C,p)$ in the algebraic case.

To formulate a more general statement for arbitrary marked metric graphs $(\Gamma,w)$, we require a suitable equivalence relation on tropical subvarieties of $\Pic^0(\Gamma)$. One candidate is the notion of rational equivalence described in \cite{ahr}. Whatever choice is made, it should be compatible with tropical intersection theory, e.g. as developed in \cite{ar}.

To extend the analogy outlined above to arbitrary marked metric graphs, several questions present themselves.

\begin{qu}
If $(\Gamma,w)$ is any marked metric graph, is the local dimension of $W^\lambda(\Gamma,w)$ at least $g- |\lambda|$ at every point?
\end{qu}

Note that the semicontinuity results of Section \ref{sec_trop} imply that the local dimension of $W^\lambda(\Gamma,w)$ is at least $g-|\lambda|$ at every point that lifts to a point of $W^\lambda(C,p)$ on an algebraic marked curve. This does not rule out the existence of extra components of smaller dimension.

\begin{qu}
If $(\Gamma,w)$ is any marked metric graph, does the locus $W^\lambda(\Gamma,w)$ always support a tropical subvariety equivalent (in a suitable sense) to $\Theta_\Gamma^{|\lambda|} / \prod_{(x,y) \in \lambda} \hook(x,y)$? Is there a ``canonical'' such locus that varies continuously as the edge lengths vary (analogous to the ``stable'' locus defined above for chains of cycles)?
\end{qu}

\begin{qu}
If $\Gamma$ is a metric graph, $w$ is a marked point, and $\lambda$ is a partition such that $\wl(\Gamma,w)$ has pure dimension $g - |\lambda|$, must $W^\lambda(\Gamma,w)$ be equivalent (in a suitable sense) to $\Theta_\Gamma^{|\lambda|} / \prod_{(x,y) \in \lambda} \hook(x,y)$?
\end{qu}

\bibliography{main}{}

\begin{thebibliography}{ACGH85}

\bibitem[ACGH85]{acgh}
E.~Arbarello, M.~Cornalba, P.~A. Griffiths, and J.~Harris.
\newblock {\em Geometry of algebraic curves. {V}ol. {I}}, volume 267 of {\em
  Grundlehren der Mathematischen Wissenschaften [Fundamental Principles of
  Mathematical Sciences]}.
\newblock Springer-Verlag, New York, 1985.

\bibitem[AHR16]{ahr}
Lars Allermann, Simon Hampe, and Johannes Rau.
\newblock On rational equivalence in tropical geometry.
\newblock {\em Canad. J. Math.}, 68(2):241--257, 2016.

\bibitem[Ait43]{aitken}
A.~C. Aitken.
\newblock The monomial expansion of determinantal symmetric functions.
\newblock {\em Proc. Roy. Soc. Edinburgh. Sect. A.}, 61:300--310, 1943.

\bibitem[AR10]{ar}
Lars Allermann and Johannes Rau.
\newblock First steps in tropical intersection theory.
\newblock {\em Math. Z.}, 264(3):633--670, 2010.

\bibitem[Bak08]{baker08}
Matthew~and Baker.
\newblock Specialization of linear systems from curves to graphs.
\newblock {\em Algebra Number Theory}, 2(6):613--653, 2008.
\newblock With an appendix by Brian Conrad.

\bibitem[BR15]{br}
Matthew Baker and Joseph Rabinoff.
\newblock The skeleton of the {J}acobian, the {J}acobian of the skeleton, and
  lifting meromorphic functions from tropical to algebraic curves.
\newblock {\em Int. Math. Res. Not. IMRN}, (16):7436--7472, 2015.

\bibitem[CDPR12]{cdpr}
Filip Cools, Jan Draisma, Sam Payne, and Elina Robeva.
\newblock A tropical proof of the {B}rill-{N}oether theorem.
\newblock {\em Adv. Math.}, 230(2):759--776, 2012.

\bibitem[EH86]{eh}
David Eisenbud and Joe Harris.
\newblock Limit linear series: basic theory.
\newblock {\em Invent. Math.}, 85(2):337--371, 1986.

\bibitem[FRT54]{frame}
J.~S. Frame, G.~de~B. Robinson, and R.~M. Thrall.
\newblock The hook graphs of the symmetric groups.
\newblock {\em Canadian J. Math.}, 6:316--324, 1954.

\bibitem[Ful92]{fulton}
William Fulton.
\newblock Flags, {S}chubert polynomials, degeneracy loci, and determinantal
  formulas.
\newblock {\em Duke Math. J.}, 65(3):381--420, 1992.

\bibitem[GK08]{gk}
Andreas Gathmann and Michael Kerber.
\newblock A {R}iemann-{R}och theorem in tropical geometry.
\newblock {\em Math. Z.}, 259(1):217--230, 2008.

\bibitem[Gub07]{gubler}
Walter Gubler.
\newblock Tropical varieties for non-{A}rchimedean analytic spaces.
\newblock {\em Invent. Math.}, 169(2):321--376, 2007.

\bibitem[MZ08]{mz}
Grigory Mikhalkin and Ilia Zharkov.
\newblock Tropical curves, their {J}acobians and theta functions.
\newblock In {\em Curves and abelian varieties}, volume 465 of {\em Contemp.
  Math.}, pages 203--230. Amer. Math. Soc., Providence, RI, 2008.

\bibitem[Oss16]{oss14dim}
Brian Osserman.
\newblock Dimension counts for limit linear series on curves not of compact
  type.
\newblock {\em Math. Z.}, 284(1-2):69--93, 2016.

\bibitem[Pfl16]{pfl2}
Nathan Pflueger.
\newblock {B}rill-{N}oether varieties of k-gonal curves.
\newblock {\em arXiv preprint arXiv:1603.08856}, 2016.

\end{thebibliography}
\bibliographystyle{alpha}

\end{document}